\newtheorem{theorem}{Theorem}[section]
\newtheorem{lemma}[theorem]{Lemma}
\numberwithin{equation}{section}
\newtheorem{proposition}[theorem]{Proposition}
\newtheorem{corollary}[theorem]{Corollary}
\newtheorem{definition}[theorem]{Definition}
\newtheorem{remark}[theorem]{Remark}
\numberwithin{equation}{section}
\begin{document}
	
\title[Nonlinear Schr\"{o}dinger equations with mixed fractional Laplacians]
{Normalized solutions for the NLS equation with mixed fractional Laplacians and combined nonlinearities}

\author{Shubin Yu}
\address{School of Mathematics and Statistics, Southwest University, Chongqing 400715, People's Republic of China.}
\email{yshubin168@163.com}

\author{Chen Yang}
\address{School of Mathematics and Statistics, Southwest University, Chongqing 400715, People's Republic of China.}
\email{yangchen6858@163.com}

\author{Chun-Lei Tang$^*$}
\address{School of Mathematics and Statistics, Southwest University, Chongqing 400715, People's Republic of China.}
\email{tangcl@swu.edu.cn}

\footnotetext[1]{Corresponding author.}
\footnotetext[2]{Project supported by the National Natural Science Foundation of China (No.12371120).}
\subjclass[2020]{35R11, 35A15, 35B33, 35J60.}

\keywords{mixed fractional Laplacians; normalized solutions; ground states; combined
nonlinearities.}

\begin{abstract}
We look for normalized solutions to the nonlinear Schr\"{o}dinger equation with mixed  fractional Laplacians and combined nonlinearities
$$
\left\{\begin{array}{ll}
(-\Delta)^{s_{1}} u+(-\Delta)^{s_{2}} u=\lambda u+\mu |u|^{q-2}u+|u|^{p-2}u \ \text{in}\;{\mathbb{R}^{N}}, \\[0.1cm]
\int_{\mathbb{R}^{N}}|u|^2\mathrm dx=a^2,
\end{array}
\right.
$$
where $N\geq 2,\;0<s_2<s_1<1, \mu>0$ and $\lambda\in\mathbb R$ appears as an unknown Lagrange multiplier.
We mainly focus on some special cases, including fractional Sobolev subcritical or critical exponent.
More precisely, for $2<q<2+\frac{4s_2}{N}<2+\frac{4s_1}{N}
      <p<2_{s_1}^{\ast}:=\frac{2N}{N-2s_1}$, we prove that the
above problem has at least two solutions: a ground state with negative energy and a solution of mountain pass type with positive energy. For $2<q<2+\frac{4s_2}{N}$ and $p=2_{s_1}^{\ast}$, we also obtain  the existence of ground states.
Our results extend some previous ones of Chergui et al. (Calc. Var. Partial Differ. Equ., 2023) and Luo et al. (Adv. Nonlinear Stud., 2022).
\end{abstract}
\maketitle
\vspace {-1cm}

\section{Introduction}
In this paper, we consider the following nonlinear Schr\"{o}dinger equation with mixed fractional  Laplacians and combined nonlinearities
\begin{equation}\label{eqn:Mixed-fractioanl-CN}
\left\{\begin{array}{ll}
(-\Delta)^{s_{1}} u+(-\Delta)^{s_{2}} u=\lambda u+\mu |u|^{q-2}u+|u|^{p-2}u \ \text{in}\ {\mathbb{R}^{N}}, \\[0.1cm]
\int_{\mathbb{R}^{N}}|u|^2\mathrm dx=a^2,
\end{array}
\right.
\end{equation}
where $N\geq 2,0<s_2<s_1<1$, $2<q<2+\frac{4s_2}{N}<2+\frac{4s_1}{N}
<p\leq2_{s_1}^{\ast}:=\frac{2N}{N-2s_1}$, $\mu>0$ and $\lambda\in\mathbb R$ is an unknown Lagrange multiplier.
The fractional Laplacian $(-\Delta)^s$ with $0<s<1$
can be viewed as a pseudo-differential
operator of symbol $|\xi|^{2s}$, that is, $ (-\Delta)^su=\mathcal{F}^{-1}(|\xi|^{2s}\mathcal{F}(u))$\;for $\xi\in \mathbb{R}^{N}$,\;where $\mathcal{F}$\;denotes the Fourier transform, see \cite{NPV2012} for more details.

Note that if
problem \eqref{eqn:Mixed-fractioanl-CN} only involves single fractional laplacian, it will be reduced to the well-known fractional Schr\"{o}dinger equation with prescribed mass and the existence of normalized solutions has been widely studied, see for instance \cite{Luo2020,Appolloni2021,Zhang2022,Zhen2022} and the references therein. In recent years,
 based on their application in biology, such as describing the diffusion in an ecological niche subject to nonlocal dispersals,
the mixed
 fractional
Laplacians have received more attention.
 Indeed,
 the mixed fractional laplacians are the outcome of the superposition of two long-range L\'{e}vy processes or a classical Brownian motion and a long-range process.
 The population diffuses according to two or more types of nonlocal dispersals, modeled by L\'{e}vy flights and encoded by two or more fractional Laplacians with two different powers.
 We refer the reader to recent papers \cite{Valdinoci-arXiv,Chen-arXiv} for this topic.

Regarding the existence of normalized solutions for problem \eqref{eqn:Mixed-fractioanl-CN}, the current works
are presented in \cite{Chergui-Gou-Hajaiej-2023-CVPDE,luo-Hajaiej-NA-2022}, where the authors only considered
\eqref{eqn:Mixed-fractioanl-CN} with $\mu=0$, that is,
\begin{equation}\label{eqn:Mixed-fractioanl-single-power}
\left\{\begin{array}{ll}
(-\Delta)^{s_{1}} u+(-\Delta)^{s_{2}} u=\lambda u+|u|^{p-2}u \ \text{in}\ {\mathbb{R}^{N}}, \\[0.1cm]
\int_{\mathbb{R}^{N}}|u|^2\mathrm dx=a^2,
\end{array}
\right.
\end{equation}
where $N\geq 1,0<s_2<s_1<1$ and $2<p<2_{s_1}^{\ast}$. In view of the well-known Gagliardo-Nirenberg inequality (see Lemma \ref{lem:GN-inequality}),
the number $\bar p:=2+\frac{4s_1}{N}$ is the $L^2$-critical exponent. If $2<p<\bar p$, the corresponding constrained functional is bounded from below and
the authors in \cite{luo-Hajaiej-NA-2022}
gave the
sharp existence of
 ground states in the sense of Definition \ref{def:nonexistence-ground-state} below.
If $\bar p<p<2_{s_1}^*$, on the contrary, the constrained functional is unbounded from below. At this point, by introducing the Pohozaev manifold, Chergui et al. \cite{Chergui-Gou-Hajaiej-2023-CVPDE} established the existence of ground states at least the $L^2$-critical regime (i.e. $\bar p\leq p<2_{s_1}^*$).
 Moreover, they also obtained the multiplicity
of bound state solutions and
the orbital instability of ground
states. In particular, due to the
appearance of low order fractional Laplacian $(-\Delta)^{s_{2}}$,
 it is necessary to consider the impact of exponents $\tilde p:=2+\frac{4s_2}{N}$ and $2_{s_2}^*:=\frac{2N}{N-2s_2}$, which
 makes
the problem \eqref{eqn:Mixed-fractioanl-single-power} and also problem  \eqref{eqn:Mixed-fractioanl-CN} different from the nonlinear Schr\"{o}dinger equations with single fractional Laplacian.

Compared to the above results, we are concerned with  problem
\eqref{eqn:Mixed-fractioanl-CN} with $\mu>0$.
Obviously, the energy functional of \eqref{eqn:Mixed-fractioanl-CN} can be defined by
$$
E_{\mu}(u):=\frac{1}{2}\int_{\mathbb{R}^{N}}|(-\Delta)^{\frac{s_{1}}{2}} u|^2 dx+\frac{1}{2}\int_{\mathbb{R}^{N}}|(-\Delta)^{\frac{s_{2}}{2}} u|^2 dx-
\frac{1}{p}\int_{\mathbb{R}^{N}}|u|^{p}dx-\frac{\mu}{q}
\int_{\mathbb{R}^{N}}|u|^{q}dx$$
and the solutions of problem \eqref{eqn:Mixed-fractioanl-CN}
can be obtained as critical points of the energy functional $E_\mu$
restricted on the constraint
$$
S_a=\{u\in H^{s_{1}}(\mathbb{R}^2):|u|_{2}^{2}=a^2>0\}.
$$
It is obvious that $E_{\mu}\in C^{1}(H^{s_{1}}(\mathbb{R}^N),\mathbb{R})$.
If $\mu>0$, the nonlinearities are seen as combined cases, which were introduced by Soave \cite{SN2020,Soave-2020-JFA}.
Indeed, as stated in \cite{SN2020}, the interplay between $L^2$-subcritical, $L^2$-critical and $L^2$-supercritical nonlinearities has a
deep impact on the geometry of the functional $E_\mu|_{S_a}$ and the existence normalized solutions.
In this context, there may be two critical points of $E_\mu|_{S_a}$
if $q$ and $p$ are between the two sides of the $L^2$-critical exponent $\bar p=2+\frac{4s_1}{N}$. One of the critical points will be the ground state of problem \eqref{eqn:Mixed-fractioanl-CN} in the following sense.
\begin{definition}\label{def:nonexistence-ground-state}\rm
We say that $u_a$ is a ground state to \eqref{eqn:Mixed-fractioanl-CN} on $S_a$ if it is a solution to \eqref{eqn:Mixed-fractioanl-CN}
having minimal energy
among all the solutions which belongs to $S_a$, that is,
$$
d E_{\mu}|_{S_a}(u_a)=0\ \mbox{and}\
E_{\mu}(u_{a})=\inf\Bigl\{E(u):u\in S_a,\ dE_{\mu}|_{S_a}(u)=0\Bigr\}.
$$
\end{definition}

In light of the above discussion,
we mainly focus on the following three cases:
\begin{enumerate}
  \item $2<q<2+\frac{4s_2}{N}<2+\frac{4s_1}{N}<p< 2_{s_1}^{\ast}$;
  \item $2<q<2+\frac{4s_2}{N}<p= 2_{s_1}^{\ast}$;
  \item $2+\frac{4s_1}{N}<q<p= 2_{s_1}^{\ast}$.
\end{enumerate}
Note that we restrict $2<q<2+\frac{4s_2}{N}$ in cases $(1)$ and $(2)$. Indeed, if $2+\frac{4s_2}{N}<q<2+\frac{4s_1}{N}$, the geometry of the functional $E_\mu|_{S_a}$ is very complex, and we will not discuss it in this article.
We refer the reader to \cite{Chen-MA-2024} for some inspirations.
Based on the above three cases, we are interested in the following three questions:
\begin{enumerate}
  \item [$(Q_1)$] Does $E_\mu|_{S_a}$ have two critical points, one is the local minimizer and the other is the mountain pass type if $p$, $q$ satisfy case $(1)$?
  \item [$(Q_2)$]
If $(Q_1)$ is solvable, can the results be obtained for case (2)?
  \item [$(Q_3)$] Does $E_\mu|_{S_a}$ have a critical point of mountain pass type if $p$, $q$ satisfy case $(3)$?
\end{enumerate}
Moreover, in present paper, we shall provide an affirmative answer
  for $(Q_1)$ and partially resolve $(Q_2)$.
For $(Q_3) $, which appears as an open question, we will provide some comments at the end of this section.

First, we
give
the main result for $p$, $q$ satisfy case $(1)$.
To this end, we assume that $a, \mu>0$ satisfy the following the conditions:
 \begin{itemize}
   \item [$(A_0)$]:
$$
a^{\frac{2(p-2)s_1}{N(p-2)-4s_1}}
<\left(\frac{s_1-s_2}{C_{N,s_1,p}}\right)
^{\frac{2s_1}{N(p-2)-4s_1}}
\left(\frac{s_1-\kappa_p}{\kappa_p-s_2}
\right)^{\frac{s_1}{2(s_1-s_2)}}\ \mbox{if}\ p> \frac{2N}{N-2s_2};
$$
\item [$(A_1)$]:$$\begin{aligned}
   &\mu a^{q-\frac{N(q-2)}{2 s_1}+ \left(p-\frac{N(p-2)}{2 s_1}\right)\cdot{\frac{4s_1-N(q-2)}{N(p-2)-4s_1}}}\\
   &\ \ <\bigg(\frac{p(4s_1-N(q-2))}{2NC_{N,s_1,p}(p-q)}\bigg)
   ^{\frac{4s_1-N(q-2)}{N(p-2)-4s_1}}
   \bigg(\frac{N(p-2)-4s_1}{2N(p-q)}\bigg)\frac{q}{C_{N,s_1,q}};
 \end{aligned}
$$
   \item [$(A_2)$]:
\begin{equation*}
\begin{aligned}
&a^{\frac{2qs_1 -N(q-2)}{4s_1 -N(q-2)}+{\frac{2p s_1 -N(p-2)}{N(p-2)-4s_1}}}\mu^{\frac{2 s_1}{4s_1 -N(q-2)}}\\
&\ \ <\bigg(\frac{s_1(p\kappa_p -2s_1)}{ C_{N,s_1,q} \kappa_q(p\kappa_p -q\kappa_q )}\bigg)^{\frac{2 s_1}{4s_1 -N(q-2)}} \bigg(\frac{s_1(2s_1 -q\kappa_q)}{C_{N,s_1,p}\kappa_p(p\kappa_p - q\kappa_q )}\bigg)^{\frac{2 s_1}{N(p-2)-4s_1}}.
\end{aligned}
\end{equation*}
  \end{itemize}
As we will see, this result states that $(Q_1)$ is solved and also implies that problem \eqref{eqn:Mixed-fractioanl-CN}
has two solutions: a ground state with negative energy and a solution of mountain pass type with positive energy.
\begin{theorem}\label{Thm:q-p-mass-subcritical-supcritical}
Let $N\geq2,\,2<q<2+\frac{4s_2}{N}<2+\frac{4s_1}{N}<p<2_{s_1}^{\ast} $, $a,\mu>0$  satisfy conditions $(A_1)-(A_2)$ and $(A_0)$ holds if $p>\frac{2N}{N-2s_2}$.
Then
\begin{itemize}
 \item [($i$)]
 $E_{\mu}|_{S_a}$ has a critical point $\tilde{u}$ at negative level $\gamma_{\mu}(a)<0$, which is an interior local minimizer of $E_{\mu}$ on the set
$$A_{R_0}:=\left\{u\in S_a:|(-\Delta)^{\frac{s_1}{2}} u|_{2}<R_0\right\}$$
  for a suitable $R_0>0$.
 \item [($ii$)] $E_{\mu}|_{S_a}$ has a second critical point of mountain pass type $\hat{u}$ at positive level $\sigma_{\mu}(a)>0$.
 \item [($iii$)] Both $\tilde{u}$ and $\hat{u}$ are nonnegative, radially symmetric, and solve
\eqref{eqn:Mixed-fractioanl-CN} for suitable $\tilde{\lambda}$, $\hat{\lambda}<0$. In particular, $\tilde{u}$  is a ground state of \eqref{eqn:Mixed-fractioanl-CN}  and radially decreasing.
\end{itemize}
\end{theorem}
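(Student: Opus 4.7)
\medskip
\noindent\textbf{Proof proposal.} The plan is to follow the Soave-type strategy developed in \cite{SN2020,Soave-2020-JFA} and adapted to the fractional setting, working on the Pohozaev-type fiber map induced by the $L^{2}$-preserving dilation
\[
(t\star u)(x):=t^{N/2}u(tx),\qquad t>0,
\]
whose $s_1$-seminorm scales like $t^{2s_1}$ and whose $s_2$-seminorm like $t^{2s_2}$. For fixed $u\in S_a$, set $\psi_u(t):=E_{\mu}(t\star u)$. A direct computation gives
\[
\psi_u(t)=\tfrac{t^{2s_1}}{2}|(-\Delta)^{s_1/2}u|_2^{2}
+\tfrac{t^{2s_2}}{2}|(-\Delta)^{s_2/2}u|_2^{2}
-\tfrac{\mu\, t^{q\kappa_q}}{q}|u|_q^{q}
-\tfrac{t^{p\kappa_p}}{p}|u|_p^{p},
\]
with $\kappa_r=N(r-2)/(2s_1 r)$. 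Since $q\kappa_q<2s_2<2s_1<p\kappa_p$, $\psi_u$ is negative for small $t$, tends to $-\infty$ as $t\to\infty$, and under $(A_0)$--$(A_2)$ and the fractional Gagliardo--Nirenberg inequality one can show that the function $h(R):=\min_{\|u\|_{s_1}=R,u\in S_a}\psi_u(1)$, viewed as a function of $R=|(-\Delta)^{s_1/2}u|_2$, has the ``mountain'' shape: there exist $0<R_0<R_1$ with $h<0$ for $R<R_0$, $h$ attains a positive maximum on $(R_0,R_1)$, and $h<0$ again for $R$ large. Assumption $(A_0)$ is what keeps the $s_2$-term controlled when $p>2^{*}_{s_2}$, and $(A_1),(A_2)$ are the sharp smallness conditions that create the barrier separating the interior well from the exterior negative region.

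\medskip
\noindent\emph{Part (i): local minimizer.} Restrict $E_{\mu}$ to $A_{R_0}$. The Gagliardo--Nirenberg inequality shows $E_{\mu}$ is bounded below and coercive on $A_{R_0}$, and by choosing a suitable test function (e.g.\ $t\star u$ with $t$ small) one sees $\gamma_{\mu}(a):=\inf_{A_{R_0}}E_{\mu}<0$. Because of the strict barrier $h>0$ on $\partial A_{R_0}$, a minimizing sequence stays strictly inside $A_{R_0}$, so Ekeland's variational principle on $S_a$ produces a bounded Palais--Smale sequence $(u_n)$ at level $\gamma_\mu(a)$ with $P_\mu(u_n)\to 0$, where $P_\mu$ is the Pohozaev functional. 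Passing to Schwarz symmetrizations, one may assume $u_n$ nonnegative and radially decreasing; the compact embedding $H^{s_1}_{\mathrm{rad}}(\mathbb R^N)\hookrightarrow L^r(\mathbb R^N)$ for $2<r<2^*_{s_1}$ yields strong convergence of the nonlinear terms, and from $\gamma_\mu(a)<0$ one deduces $\tilde\lambda<0$, which together with $P_\mu(u_n)\to 0$ forces strong convergence in $H^{s_1}$. The limit $\tilde u\in A_{R_0}$ is the desired interior local minimizer.

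\medskip
\noindent\emph{Part (ii): mountain pass solution.} Construct a path $\gamma_0(\tau)=(\tau t_0+(1-\tau) t_1)\star \tilde u$ joining $\tilde u\in A_{R_0}$ to a point outside $A_{R_1}$ at negative energy (possible since $\psi_{\tilde u}(t)\to-\infty$). This gives the mountain pass geometry with
\[
\sigma_\mu(a):=\inf_{\gamma\in\Gamma}\max_{\tau\in[0,1]}E_\mu(\gamma(\tau))\geq \sup_{\partial A_{R_0}\cap S_a}E_\mu>\max\{\gamma_\mu(a),E_\mu(\gamma(1))\},
\]
for the obvious class of continuous paths $\Gamma$ on $S_a$. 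A minimax scheme using the $t\star u$ deformation (in the spirit of Jeanjean's trick) produces a Palais--Smale sequence $(v_n)\subset S_a^{\mathrm{rad}}$ at level $\sigma_\mu(a)$ that in addition satisfies $P_\mu(v_n)\to 0$. The Pohozaev identity plus $q\kappa_q<2s_2$ gives boundedness of $(v_n)$ in $H^{s_1}$, and radial compactness recovers strong convergence in $L^q$ and $L^p$ (here $p<2^*_{s_1}$ is crucial). The Lagrange multiplier $\hat\lambda$ is negative by testing the equation against $v_n$ and comparing with $P_\mu(v_n)\to 0$; then a standard Brezis--Lieb / splitting argument upgrades weak to strong convergence.

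\medskip
\noindent\emph{Part (iii): qualitative properties.} Nonnegativity follows from replacing $u$ by $|u|$ in each minimization/minimax (the even nonlinearities do not see the sign), radial symmetry from Schwarz rearrangement applied throughout, and monotonicity of $\tilde u$ from the fact that it is the radially decreasing rearrangement realizing the minimum. The fact that $\tilde u$ is a ground state is obtained by comparing any solution with $E_\mu\leq \gamma_\mu(a)<0$, forcing it to sit in $A_{R_0}$ (the only region where $E_\mu<0$ is reachable, by the barrier), hence its energy is $\geq \gamma_\mu(a)$; equality characterizes the minimizer. The main obstacle throughout is the compactness of the mountain pass Palais--Smale sequence: in the purely $L^{2}$-supercritical phase the energy does not control the $H^{s_1}$-norm directly, so the extraction of the almost-Pohozaev condition via Jeanjean's auxiliary functional (now adapted to the two-scale fractional dilation) and the careful use of the sign $\hat\lambda<0$ to exclude dichotomy are the crucial technical points.
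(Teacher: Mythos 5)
Your overall architecture matches the paper's: Gagliardo--Nirenberg lower bound giving the two-bump function $h$ and the radii $R_0<R_1$ under $(A_1)$, local minimization on $A_{R_0}$ plus Ekeland, and a Jeanjean-type augmented functional for the mountain pass level, with compactness recovered through radial symmetry and an almost-Pohozaev condition. However, there are two genuine gaps.

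First, the sign of the Lagrange multiplier. You assert that $\tilde\lambda<0$ ``from $\gamma_\mu(a)<0$'' and that $\hat\lambda<0$ follows ``by testing the equation against $v_n$ and comparing with $P_\mu(v_n)\to0$.'' Neither is enough. Combining the Nehari-type identity with $P_\mu(u_n)\to0$ gives
$\lambda_n a^2=(1-\tfrac{s_1}{\kappa_p})|(-\Delta)^{s_1/2}u_n|_2^2+(1-\tfrac{s_2}{\kappa_p})|(-\Delta)^{s_2/2}u_n|_2^2+\mu(\tfrac{\kappa_q}{\kappa_p}-1)|u_n|_q^q+o_n(1)$,
and the middle coefficient is \emph{positive} precisely when $p>\frac{2N}{N-2s_2}$. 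This is where $(A_0)$ enters in the paper: assuming $\lambda\ge 0$, one derives a lower bound on $|(-\Delta)^{s_1/2}u|_2$ from Gagliardo--Nirenberg and an upper bound from the interpolation inequality $|(-\Delta)^{s_2/2}u|_2^2\le |(-\Delta)^{s_1/2}u|_2^{2s_2/s_1}|u|_2^{2(s_1-s_2)/s_1}$, and $(A_0)$ makes these incompatible. You instead attribute $(A_0)$ to ``keeping the $s_2$-term controlled'' in the mountain geometry, which misplaces its role entirely; without the multiplier argument your strong-convergence step collapses in the regime $p>\frac{2N}{N-2s_2}$.

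Second, the ground state property of $\tilde u$. Your argument — any solution with energy $\le\gamma_\mu(a)<0$ must sit in $A_{R_0}$ because of the barrier — only uses that $E_\mu>0$ on the annulus $R_0\le|(-\Delta)^{s_1/2}u|_2\le R_1$; it does not exclude critical points with $|(-\Delta)^{s_1/2}u|_2>R_1$, where the energy is again negative and can lie below $\gamma_\mu(a)$. The paper closes this by showing that every critical point of $E_\mu|_{S_a}$ lies on the Pohozaev manifold $\mathcal P$, that $(A_2)$ forces $\mathcal P_0=\emptyset$ so $\mathcal P=\mathcal P_+\cup\mathcal P_-$, and that $\gamma_\mu(a)=\inf_{\mathcal P}E_\mu=\inf_{\mathcal P_+}E_\mu$ while $\inf_{\mathcal P_-}E_\mu>0$. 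Your proposal never invokes $(A_2)$ or the decomposition of $\mathcal P$, yet both are indispensable here and also for identifying $\sigma_\mu(a)=\inf_{\mathcal P_-\cap S_{a,r}}E_\mu$ and for the $C^1$ dependence $u\mapsto t_u$ used to show every admissible path crosses $\mathcal P_-$. (Minor further issues: with the multiplicative dilation $t^{N/2}u(tx)$ the $L^r$-term scales as $t^{N(r-2)/2}$, not $t^{N(r-2)/(2s_1)}$; and the mountain pass inequality should read $\sigma_\mu(a)\ge\inf_{\partial A_{R_0}\cap S_a}E_\mu$, not $\sup$.)
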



In the following, we consider the case $(2)$, i.e., $2<q<2+\frac{4s_2}{N}<p= 2_{s_1}^{\ast}$.
Since $p=2_{s_1}^{*}$ is the
Sobolev critical exponent, we introduce the ideas in \cite{Jeanjean2022} to obtain a  local minimizer for $E_\mu|_{S_a}$, which is more direct compared to the  previous case.
However, it is difficult to determine the existence of mountain type solutions using existing methods, which is the essential challenge brought by Sobolev critical exponent. Now we give the corresponding result, which means that question $(Q_2)$ is partially solved and further implies that \eqref{eqn:Mixed-fractioanl-CN} has a ground state.

\begin{theorem}\label{Thm:q-mass-subcritical-p-sobolev-critical}
Let $N\geq2,\,2<q<2+\frac{4s_2}{N}< p=2_{s_1}^{*}$. For any $\mu>0$ there exists $a_0=a_0 (\mu)>0$ such that for any $a\in (0,a_0)$,
 $E_{\mu}|_{S_a}$ has a critical point $u_a$ at negative level $\overline{\gamma_{\mu}}(a)<0$, which is an interior local minimizer of $E_{\mu}$ on the set
 $$A_{a,\rho_0}:=\left\{u\in S_a:|(-\Delta)^{\frac{s_1}{2}} u|_{2}<\rho_0\right\}$$ for a suitable $\rho_0>0$.
Furthermore, there exists $\bar a_0=\bar a_0(\mu)>0$ such that if $a<\min\{a_0,\bar a_0\}$, then $u_a$ is a ground state of \eqref{eqn:Mixed-fractioanl-CN} for suitable $\bar \lambda\in\mathbb R$, and $u_a$ is nonnegative and  radially decreasing.
\end{theorem}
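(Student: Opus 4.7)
The plan is to adapt the local-minimization strategy of Jeanjean--Lu from \cite{Jeanjean2022} to the mixed fractional setting with Sobolev critical exponent. First I would combine the fractional Gagliardo--Nirenberg inequality with the fractional Sobolev inequality
\[
|u|_{2_{s_1}^*}^{2_{s_1}^*} \le \mathcal S^{-2_{s_1}^*/2}\,|(-\Delta)^{s_1/2}u|_2^{2_{s_1}^*}
\]
to bound, for any $u \in S_a$ with $t := |(-\Delta)^{s_1/2}u|_2$,
\[
E_\mu(u) \ge h_{a,\mu}(t) := \tfrac{1}{2} t^2 - \tfrac{\mu}{q}C_{N,s_1,q}\, a^{q(1-\kappa_q)} t^{q\kappa_q} - \tfrac{1}{2_{s_1}^*}\mathcal S^{-2_{s_1}^*/2}\, t^{2_{s_1}^*}.
\]
Because $q\kappa_q < 2 < 2_{s_1}^*$, an elementary study of $h_{a,\mu}$ yields $a_0 = a_0(\mu)>0$ such that, for every $a\in(0,a_0)$, the function $h_{a,\mu}$ has a strictly negative local minimum at some $\tilde t$ and a strictly positive local maximum at some $\hat t > \tilde t$. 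Picking $\rho_0 \in (\tilde t, \hat t)$ with $h_{a,\mu}(\rho_0)>0$ produces the set $A_{a,\rho_0}$ of the statement; testing $E_\mu$ on dilations $\tau^{N/2} u(\tau\cdot)$ for small $\tau$ shows $\overline{\gamma_\mu}(a):=\inf_{A_{a,\rho_0}}E_\mu<0$, while the positivity of $h_{a,\mu}(\rho_0)$ keeps the infimum strictly interior.

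Next I would show that $\overline{\gamma_\mu}(a)$ is attained. Ekeland's principle on $A_{a,\rho_0}$ produces a Palais--Smale sequence $\{u_n\}$ for $E_\mu|_{S_a}$; applying symmetric decreasing rearrangement together with the fractional P\'olya--Szeg\H{o} inequality for both $(-\Delta)^{s_1/2}$ and $(-\Delta)^{s_2/2}$, I may take $u_n$ radial and nonincreasing. The associated Lagrange multipliers are bounded, so up to subsequence $u_n\rightharpoonup u_a$ in $H^{s_1}(\mathbb R^N)$ with $u_a$ solving \eqref{eqn:Mixed-fractioanl-CN} weakly for some $\bar\lambda\in\mathbb R$. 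The decisive step is to recover strong convergence at Sobolev critical growth: by a Brezis--Lieb decomposition of $|\cdot|_{2_{s_1}^*}^{2_{s_1}^*}$ and of $|(-\Delta)^{s_1/2}\cdot|_2^2$, together with the sharp fractional Sobolev inequality, any loss of compactness would contribute at least $\tfrac{s_1}{N}\mathcal S^{N/(2s_1)}>0$ to the energy; since $\overline{\gamma_\mu}(a)<0$, this is ruled out and $u_n\to u_a$ strongly in $H^{s_1}(\mathbb R^N)$. The limit lies in the interior of $A_{a,\rho_0}$, and by standard fractional regularity together with the strong maximum principle for $(-\Delta)^{s_1}+(-\Delta)^{s_2}$, $u_a$ may be taken nonnegative and radially decreasing.

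For the ground-state assertion I would exploit the Pohozaev identity
\[
s_1|(-\Delta)^{s_1/2}v|_2^2 + s_2|(-\Delta)^{s_2/2}v|_2^2 - \mu\tfrac{N(q-2)}{2q}|v|_q^q - s_1|v|_{2_{s_1}^*}^{2_{s_1}^*} = 0
\]
satisfied by every critical point $v\in S_a$ of $E_\mu|_{S_a}$ (the coefficient $s_1$ in front of the critical term arises because $\tfrac{N(p-2)}{2p}=s_1$ when $p=2_{s_1}^*$). Eliminating the $L^q$ and $L^{2_{s_1}^*}$ terms via Gagliardo--Nirenberg and Sobolev yields a lower bound $E_\mu(v)\ge \widetilde h_{a,\mu}(|(-\Delta)^{s_1/2}v|_2)$; a second smallness threshold $\bar a_0=\bar a_0(\mu)$ can then be chosen so that, for $a<\min\{a_0,\bar a_0\}$, any critical point with $E_\mu(v)\le 0$ must satisfy $|(-\Delta)^{s_1/2}v|_2<\rho_0$, i.e.\ $v\in A_{a,\rho_0}$. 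Since $u_a$ minimizes $E_\mu$ on this set, it then minimizes among all critical points on $S_a$ and is a ground state.

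The main obstacle will be the Sobolev critical compactness step. Unlike the single-fractional case, the Brezis--Lieb decomposition has to track both seminorms $|(-\Delta)^{s_j/2}\cdot|_2$ with $j=1,2$, and the subcritical $(-\Delta)^{s_2}$ piece does not rescale like the $s_1$-bubble; one must verify that its contribution is a nonnegative error compatible with the bubbling threshold $\tfrac{s_1}{N}\mathcal S^{N/(2s_1)}$, which is precisely the reason for imposing both smallness conditions $a<a_0$ (to build the well for $h_{a,\mu}$) and $a<\bar a_0$ (to localize every low-energy critical point inside the well).
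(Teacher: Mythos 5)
Your setup of the local geometry (the lower bound $E_\mu(u)\ge |(-\Delta)^{\frac{s_1}{2}}u|_2^2\,h(a,|(-\Delta)^{\frac{s_1}{2}}u|_2)$ via Gagliardo--Nirenberg plus Sobolev, the threshold $a_0(\mu)$, the choice of $\rho_0$, and $\overline{\gamma_\mu}(a)<0<\inf_{\partial A_{a,\rho_0}}E_\mu$) matches the paper's Lemmas \ref{lem:h-global-maximum}--\ref{lem:gamma-a-continuous-1}. Your route to the ground-state property (using the Pohozaev identity to show that for $a<\bar a_0$ every critical point of nonpositive energy lies in $A_{a,\rho_0}$) is a legitimate and arguably more elementary variant of the paper's argument, which instead shows $\mathcal P_0=\emptyset$ and identifies $\overline{\gamma_\mu}(a)=\inf_{\mathcal P}E_\mu$.

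The gap is in the compactness step. Your bubbling argument only controls concentration: writing $w_n=u_n-u_a$, the conclusion ``any loss costs at least $\frac{s_1}{N}\mathcal S^{\frac{N}{2s_1}}$'' rests on reducing the Nehari-type identity $\|w_n\|_{\dot H^{s_1}}^2+\|w_n\|_{\dot H^{s_2}}^2-|w_n|_{2^*_{s_1}}^{2^*_{s_1}}-\bar\lambda|w_n|_2^2\to 0$ to $\lim|w_n|_{2^*_{s_1}}^{2^*_{s_1}}=\lim\bigl(\|w_n\|_{\dot H^{s_1}}^2+\|w_n\|_{\dot H^{s_2}}^2\bigr)$, i.e.\ it presupposes $|w_n|_2\to0$. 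Nothing in your sketch prevents loss of mass ($|u_a|_2<a$): the standard way to force $|w_n|_2\to0$ from the multiplier equation needs $\bar\lambda<0$, and precisely because $p=2^*_{s_1}$ the sign of $\bar\lambda$ cannot be determined here (the paper flags this explicitly in its remarks). If $\ell_2:=\lim|w_n|_2^2>0$, the term $\bar\lambda\ell_2$ contaminates both the Sobolev comparison and the energy count, and the contradiction with $\overline{\gamma_\mu}(a)<0$ evaporates. The paper avoids this entirely: it works with a plain minimizing sequence (no Ekeland, no multipliers at this stage), proves the strict subadditivity $\overline{\gamma_\mu}(a)<\overline{\gamma_\mu}(c)+\overline{\gamma_\mu}(\sqrt{a^2-c^2})$ when $\overline{\gamma_\mu}(c)$ is attained (Lemma \ref{lem:gamma-a-continuous-1}$(iii)$) to force $|u_a|_2=a$, and then recovers $\|w_n\|_{\dot H^{s_1}}\to0$ not from the bubbling threshold but from the coercivity estimate $E_\mu(w_n)\ge\alpha_0|(-\Delta)^{\frac{s_1}{2}}w_n|_2^2+o_n(1)$ with $\alpha_0>0$, which holds inside the ball $|(-\Delta)^{\frac{s_1}{2}}\cdot|_2<\rho_0$ because $h(a_0,\rho_0)=0$ (Lemmas \ref{lem:vanishing-u-n-1}--\ref{lem:hua-M-c-shouxlianxing}); no bubble can form below the level $\rho_0$. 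You need either this subadditivity machinery or some other device excluding dichotomy before your critical-exponent accounting can close; the $s_2$-seminorm bookkeeping you identify as the main obstacle is in fact harmless, since that term is weakly lower semicontinuous and enters with a favorable sign throughout.
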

\begin{remark}\rm We remark that $a_0=a_0(\mu)$ and $\bar a_0=\bar a_0(\mu)$ are explicit for $\mu>0$, see \eqref{eqn:definition-a-0-1} and \eqref{eqn:a-0-condition-2}.
Precisely, the condition
$a<\min\{a_0,\bar a_0\}$  can be written in the form
$$
a\mu^{\frac{2 s_1}{2qs_1 -N(q-2)}}<\min\{C_0,C_1\},
$$
where $C_0>0$ and $C_1>0$ are explicit constants independent of $a$ and $\mu$. This implies that $a$ and $\mu$ are not necessarily small, since we can take one between $a$ and $\mu$
as large as we want, provided that the other is sufficient small.
\end{remark}
\begin{remark}\rm
On the other hand, observe that conditions $(A_1)$ and $(A_2)$ imply that $a$ and $\mu$
are also not necessarily small in Theorem \ref{Thm:q-p-mass-subcritical-supcritical} if $p\leq \frac{2N}{N-2s_2}$. However, if $\frac{2N}{N-2s_2}<p<2_{s_1}^*$, $(A_0)$ yields that $a>0$ must be small.
This condition  appears when we  determine the Lagrange multiplier $\lambda_n\rightarrow \lambda<0$ as $n\rightarrow\infty$ in establishing the compactness of special Palais-Smale sequences, see Lemma \ref{lem:mass-supercritical-setting-PS-compact}.
It is not difficult to see that, for $p<2_{s_1}^*$,
 we can also obtain a local minimizer that corresponds to the ground state of \eqref{eqn:Mixed-fractioanl-CN}  by repeating the procedure in Section \ref{sec:sobolev-critical}.
As a result, the condition $(A_0)$  for Theorem \ref{Thm:q-p-mass-subcritical-supcritical}-$(i)$ can be removed,
 but it cannot be removed for Theorem \ref{Thm:q-p-mass-subcritical-supcritical}-$(iii)$  since the sign of the multiplier $\tilde \lambda$ is strictly dependent on $a>0$ small and $p<2_{s_1}^*$. This further indicates that the sign of $\bar \lambda$ in Theorem \ref{Thm:q-mass-subcritical-p-sobolev-critical} cannot be determined due to $p=2_{s_1}^*$.
\end{remark}
\begin{remark}\rm
Finally, we provide some comments for open question $(Q_3)$.
After careful analysis, we find that the only difficulty is to determine the compactness of special Palais-Smale sequences.
 Indeed, the sequences can be obtained by the arguments in \cite{JEAN-NA-1997} and one can confirm that the mountain pass level is less than  the threshold  $\frac{s_1}{N}\mathcal S^{\frac{N}{2s_1}}$ by making $\mu>0$ sufficiently large.
Thus,
in view of above remark, if the sign of corresponding Lagrange multiplier can be ascertained, then question $(Q_3)$ will be solved.
\end{remark}

The remainder of this paper is organized as follows. In Section \ref{sec:pre}, we give  some  preliminaries.
 Section \ref{sec:sobolev-subcritical}
  is devoted to accomplishing the proof of Theorem \ref{Thm:q-p-mass-subcritical-supcritical} and obtaining the existence of two solutions of problem \eqref{eqn:Mixed-fractioanl-CN}. In Section \ref{sec:sobolev-critical}, we focus on the existence of an
interior local minimizer and complete the proof of Theorem \ref{Thm:q-mass-subcritical-p-sobolev-critical}.

  Throughout the paper,  we
make use of the following notations:
\begin{itemize}
  \item The fractional Sobolev space $H^s(\mathbb{R}^N)$ is defined for $0<s<1$ by  $H^s(\mathbb{R}^N):=\{u\in L^2(\mathbb{R}^N):(-\Delta)^{\frac{s}{2}}u\in L^2(\mathbb{R}^N)\},$
 which is a Hilbert space endowed with norm
 $$
 \quad\|u\|^2_{ H^s(\mathbb{R}^N)}:=|(-\Delta)^{\frac{s}{2}}
 u|_2^2+|u|_2^2=\int_{\mathbb{R}^{N}}
\int_{\mathbb{R}^{N}}\frac{|u(x)-u(y)|^{2}}{|x-y|^{N+2s}}dxdy
+|u|_2^2.
$$
  \item $H_r^{s}(\mathbb R^N)$ is denoted by the radial subspace of $ H^s(\mathbb{R}^N)$ and $S_{a,r}:=S_a\cap H_r^{s}(\mathbb R^N).$
  \item The usual norm in the Lebesgue space $L^r(\mathbb R^N)$ is denoted by $|u|_r$ with $2\leq r\leq 2_{s_1}^*$.
  \item For any $x\in \mathbb R^N$ and $R>0$, $B(x,R):= \{y\in\mathbb R^N: |x-y|<R\}.$
\end{itemize}

\section{Preliminaries}\label{sec:pre}
 In this  section, we give the workspace and  some preliminaries.
For $0<s_2<s_1<1,\;N\geq2$, we recall from \cite[Remark 1.4.1]{Cazenave-2003} that $H^{s_1}(\mathbb{R}^N)\hookrightarrow H^{s_2}(\mathbb{R}^N)$. As a result, we know that
$$
H^{s_1}(\mathbb{R}^N)\cap H^{s_2}(\mathbb{R}^N)=H^{s_1}(\mathbb{R}^N).
$$
Therefore, in present paper, we will work in the Sobolev space $H^{s_1}(\mathbb{R}^N)$.
Furthermore, for $N\geq 2$, it is well-know that
$$
H^{s_1}(\mathbb{R}^N)\hookrightarrow L^q(\mathbb{R}^N)\ \text{continuously},\ \forall q\in\left[2,\frac{2N}{N-2s_1}\right]
$$
and
$$
H_r^{s_1}(\mathbb{R}^N)\hookrightarrow L^q(\mathbb{R}^N)\ \text{compactly},\ \forall q\in\left(2,\frac{2N}{N-2s_1}\right).
$$
Moreover, we recall the fractional Gagliardo-Nirenberg inequality and Sobolev inequality, which will be frequently used in subsequent arguments. We refer to
 \cite{Boulenger-Himmelsbach-2016-JFA} and \cite{Cotsiolis2004}, respectively.

 \begin{lemma}\label{lem:GN-inequality}
Let $0<s<1$, $N>2s_1$ and $2\leq p\leq\frac{2N}{N-2s_1}$. Then for any $u\in H^{s_1}(\mathbb R^N)$,
\begin{equation}\label{eqn:G-N-equality}
\begin{aligned}
\int_{\mathbb{R}^N}|u|^pdx&\leq C_{N,s_1,p}\left(\int_{\mathbb{R}^N}|(-\Delta)^{\frac {s_1}{2}}u|^2dx\right)^{\frac{N(p-2)}{4s_1}}
\left(\int_{\mathbb{R}^N}|u|^2dx\right)^{\frac p2-\frac{N(p-2)}{4s_1}},
\end{aligned}
\end{equation}
 where $C_{N,s_1,p}>0$ denotes the optimal constant. In particular,
 there exists an optimal constant $\mathcal S>0$ depending only on $N$ such that
 \begin{equation}\label{eqn:sobolev-critical-inequality}
\mathcal{S} |u|_{2_{s_1}^{*}}^2\leq |(-\Delta)^{\frac{s_1}{2}}u|_2 ^2,\ \forall u\in H^{s_1}(\mathbb R^N).
 \end{equation}
\end{lemma}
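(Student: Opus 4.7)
The plan is to recognize Lemma \ref{lem:GN-inequality} as a pair of classical results whose proofs are well documented in the literature cited by the authors, and to outline the two independent routes I would use to establish them.

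First I would handle the fractional Sobolev inequality \eqref{eqn:sobolev-critical-inequality}. The standard approach is to realize any $u\in H^{s_1}(\mathbb{R}^N)$ as a Riesz potential $u = I_{s_1} v$ with $v=(-\Delta)^{s_1/2}u \in L^2(\mathbb{R}^N)$, and then invoke the Hardy--Littlewood--Sobolev inequality, which yields $|I_{s_1}v|_{2_{s_1}^\ast}\lesssim |v|_2$. Squaring gives \eqref{eqn:sobolev-critical-inequality} with some admissible constant, and the existence of a sharp constant $\mathcal S>0$ (together with its explicit Lieb-type extremizer) then follows from Lieb's analysis of the HLS inequality, as carried out in detail by Cotsiolis--Tavoularis. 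This is precisely the content of the second reference in the statement.

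Next I would derive the fractional Gagliardo--Nirenberg inequality \eqref{eqn:G-N-equality} by interpolation between $L^2$ and $L^{2_{s_1}^\ast}$. Given $2\le p\le 2_{s_1}^\ast$, H\"older's inequality on the Lebesgue scale gives
\begin{equation*}
|u|_p \le |u|_2^{1-\theta}\,|u|_{2_{s_1}^\ast}^{\theta},\qquad \frac{1}{p}=\frac{1-\theta}{2}+\frac{\theta(N-2s_1)}{2N},
\end{equation*}
so that $\theta = \frac{N(p-2)}{2s_1 p}$. Combining this with \eqref{eqn:sobolev-critical-inequality} and raising to the $p$-th power produces exactly the exponents $\frac{N(p-2)}{4s_1}$ on the Gagliardo semi-norm and $\frac{p}{2}-\frac{N(p-2)}{4s_1}$ on the $L^2$-norm that appear in \eqref{eqn:G-N-equality}; a quick scaling check $u_\lambda(x)=u(\lambda x)$ confirms the homogeneity degree $-N$ on both sides.

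The only delicate step is to show that the sharp constant $C_{N,s_1,p}$ is actually attained, which is what legitimizes calling it \emph{optimal}. This requires either a Schwarz symmetrization argument in the fractional setting or a concentration--compactness scheme applied to a maximizing sequence for the Gagliardo--Nirenberg quotient; this is exactly the route taken in Boulenger--Himmelsbach--Lenzmann (cf.\ the first reference in the statement). I expect this existence/attainability question to be the main obstacle, whereas the inequality itself is a one-line consequence of interpolation plus the Sobolev embedding. For the purposes of the present paper, however, only the inequality and the positivity of $C_{N,s_1,p}$ are used, so invoking the cited references suffices.
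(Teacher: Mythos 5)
Your outline is correct: the paper itself gives no proof of this lemma, merely recalling it as a classical result with citations to Boulenger--Himmelsbach--Lenzmann and Cotsiolis--Tavoularis, which is exactly what you fall back on. Your sketch (Hardy--Littlewood--Sobolev for \eqref{eqn:sobolev-critical-inequality}, then H\"older interpolation with $\theta=\frac{N(p-2)}{2s_1p}$ combined with the Sobolev inequality for \eqref{eqn:G-N-equality}) is the standard derivation, with correct exponents, and your remark that only the validity of the inequality and positivity of the constants are actually used in the paper is accurate.
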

\noindent
Here we remark that
$C_{N,s_1,p}=\mathcal S^{-\frac{2}{2_{s_1}^*}} $ in the sense of inequalities \eqref{eqn:G-N-equality} and \eqref{eqn:sobolev-critical-inequality}.

In the following, according to the ideas in \cite{SN2020}, for $u\in S_a$ and $t\in \mathbb{R}$, we define the dilation
\begin{equation}\label{eqn:u-e-t}
(t \star u)(x):=e^{\frac N2t}u(e^tx),
\end{equation}
which preserves the $L^2$-norm, i.e.,
$t\star u\in S_a$. Hence it is natural to study the fiber maps
\begin{equation}\label{E-fiber-maps}
\begin{aligned}
\Phi_u^\mu(t)&:=E_\mu(t \star u)\\
&=\frac{e^{2s_{1}t}}2\int_{\mathbb{R}^N}
|(-\Delta)^{\frac{s_1}{2}}u|^2dx+\frac{ e^{2s_{2}t}}2\int_{\mathbb{R}^N}|(-\Delta)^{\frac{s_2}{2}}u|^2dx\\
&\ \ -\frac{e^{p\kappa_pt}}p\int_{\mathbb{R}^N}|u|^pdx-
\mu\frac{e^{q\kappa_qt}}q\int_{\mathbb{R}^N}|u|^qdx,
\end{aligned}
\end{equation}
where
$$
\kappa_p:=\frac{N(p-2)}{2p}\ \ \mbox{for}\ \ p\in(2,2_{s_1}^*].
$$
Now we introduce the Pohozaev
manifold
\begin{equation}\label{eqn:P-Pohozaev-set}
\mathcal{P}_{a,\mu}=\begin{Bmatrix}u\in S_a:P_\mu(u)=0\end{Bmatrix},
\end{equation}
where $P_{\mu}(u)=0$ is the Pohozaev identity of problem \eqref{eqn:Mixed-fractioanl-CN} and
\begin{equation}\label{eqn:P-Pohozaev-expression}
P_{\mu}(u)=s_1\int_{{\mathbb{R}^{N}}}|(-\Delta)^{\frac{s_1}{2}} u|^{2}dx+ s_2\int_{{\mathbb{R}^{N}}}|(-\Delta)^{\frac{s_2}{2}} u|^{2}dx-\kappa_{p}\int_{{\mathbb{R}^{N}}}|u|^{p}dx-\mu\kappa_{q}
\int_{{\mathbb{R}^{N}}}|u|^{q}dx,
\end{equation}
see for example \cite{Chergui-Gou-Hajaiej-2023-CVPDE}.
It is well-known that any critical point of $E_\mu|_{S_a}$ stays in $\mathcal{P}_{a,\mu}.$ Moreover, we have
\begin{proposition}\label{pro:t-p-uniquess}
Let $u\in S_a$, then $t\in\mathbb{R}$ is a critical point for $\Phi_{u}^{\mu}$ if and only if $t\star u\in \mathcal{P}_{a,\mu}$. Moreover, the map
\begin{equation}\label{eqn:s-u-continous-Hs1}
(t,u)\in\mathbb{R}\times H^{s_1}(\mathbb{R}^{N})\mapsto(t\star u)\in H^{s_1}(\mathbb{R}^{N})\ \text{is continuous.}
\end{equation}
\end{proposition}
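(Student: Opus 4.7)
The plan is to verify the two parts separately, both by direct computation from the explicit formula \eqref{E-fiber-maps}.

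For the first assertion, I would simply differentiate $\Phi_u^\mu$ term by term in $t$ to obtain
$$
(\Phi_u^\mu)'(t)=s_1e^{2s_1t}|(-\Delta)^{\frac{s_1}{2}}u|_2^2+s_2e^{2s_2t}|(-\Delta)^{\frac{s_2}{2}}u|_2^2-\kappa_pe^{p\kappa_pt}|u|_p^p-\mu\kappa_qe^{q\kappa_qt}|u|_q^q,
$$
and separately compute the scaling identities $|(-\Delta)^{s_i/2}(t\star u)|_2^2=e^{2s_it}|(-\Delta)^{s_i/2}u|_2^2$ and $|t\star u|_r^r=e^{r\kappa_rt}|u|_r^r$ using the change of variables $y=e^tx$ (for the $L^r$ norms) and the Fourier-multiplier representation of $(-\Delta)^{s_i/2}$ (for the homogeneous seminorms). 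Substituting these into \eqref{eqn:P-Pohozaev-expression} yields $P_\mu(t\star u)=(\Phi_u^\mu)'(t)$, which immediately gives the equivalence $t\star u\in\mathcal P_{a,\mu}\Longleftrightarrow(\Phi_u^\mu)'(t)=0$.

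For the continuity statement \eqref{eqn:s-u-continous-Hs1}, I would pass to Fourier variables: a direct calculation shows $\widehat{t\star u}(\xi)=e^{-\frac{N}{2}t}\hat u(e^{-t}\xi)$, so that
$$
\|t\star u\|_{H^{s_1}}^2=\int_{\mathbb R^N}(1+|\xi|^{2s_1})|\widehat{t\star u}(\xi)|^2\,d\xi=|u|_2^2+e^{2s_1t}|(-\Delta)^{\frac{s_1}{2}}u|_2^2.
$$
This identity proves two things at once. First, for every fixed $t$ the linear map $u\mapsto t\star u$ is a bounded operator on $H^{s_1}(\mathbb R^N)$ with norm bounded by $\max\{1,e^{s_1t}\}$, hence continuous in $u$, uniformly for $t$ in compact sets. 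Second, it shows that $\|t_n\star u\|_{H^{s_1}}\to\|t\star u\|_{H^{s_1}}$ whenever $t_n\to t$.

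To upgrade this to strong convergence $t_n\star u\to t\star u$ for fixed $u$, I would argue by density: for $u\in \mathcal S(\mathbb R^N)$ the expression $e^{-\frac{N}{2}t}\hat u(e^{-t}\xi)$ is jointly continuous in $(t,\xi)$ and admits a uniform $L^2$-dominant, so dominated convergence on the Fourier side gives $t_n\star u\to t\star u$ in $H^{s_1}$; then an $\varepsilon/3$ argument using the uniform operator bound above extends this to arbitrary $u\in H^{s_1}(\mathbb R^N)$. Combining with the continuity in $u$ via
$$
\|t_n\star u_n-t\star u\|_{H^{s_1}}\le\|t_n\star(u_n-u)\|_{H^{s_1}}+\|t_n\star u-t\star u\|_{H^{s_1}}
$$
and the locally uniform operator bound on the first term yields joint continuity. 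I expect the main (mildly) technical point to be the passage from Schwartz functions to general $H^{s_1}$ functions in the continuity-in-$t$ step, but this is handled cleanly by the uniform operator-norm bound.
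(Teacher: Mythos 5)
Your proposal is correct and is exactly the standard verification that the paper omits: the identity $P_\mu(t\star u)=(\Phi_u^\mu)'(t)$ via the scaling relations $|(-\Delta)^{s_i/2}(t\star u)|_2^2=e^{2s_it}|(-\Delta)^{s_i/2}u|_2^2$, $|t\star u|_r^r=e^{r\kappa_r t}|u|_r^r$, and the joint continuity argument (locally uniform operator bound in $u$ plus density/dominated convergence in $t$) is precisely the content of the cited \cite[Lemma 3.5]{Bartsch-Soave-CV-2019}. All computations check out; no gaps.
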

\begin{proof}
The proof directly follows
the definition of $\mathcal{P}_{a,\mu}$ and \cite[Lemma 3.5]{Bartsch-Soave-CV-2019}. So we omit the details.
\end{proof}
Obviously, the above proposition implies that $u\in \mathcal{P}_{a,\mu}$ if and only if $0$ is a critical point of $\Phi_{u}^{\mu}$.
In this direction, we consider the decomposition of $\mathcal{P}_{a,\mu}$ into the disjoint union $\mathcal{P}_{a,\mu}=\mathcal{P}_{+}\cup\mathcal{P}_{0}
\cup\mathcal{P}_{-}$, where
\begin{equation*}
\begin{aligned}
\mathcal{P}_{+}&:=\left\{u\in\mathcal{P}_{a,\mu}:2s_{1}^2
|(-\Delta)^{\frac{s_1}{2}}u|_{2}^{2}+2 s_{2}^2|(-\Delta)^{\frac{s_2}{2}}u|_{2}^{2}>\mu q\kappa_{q}^{2}|u|_{q}^{q}+p\kappa_{p}^{2}|u|_{p}^{p}\right\}\\
&\;=\left\{u\in\mathcal{P}_{a,\mu}:(\Phi_{u}^{\mu})^{\prime\prime}(0)>0\right\};\\
\mathcal{P}_{-}&:=\left\{u\in\mathcal{P}_{a,\mu}:2s_{1}^2|
(-\Delta)^{\frac{s_1}{2}}u|_{2}^{2}+2 s_{2}^2|(-\Delta)^{\frac{s_2}{2}}u|_{2}^{2}<\mu q\kappa_{q}^{2}|u|_{q}^{q}+p\kappa_{p}^{2}|u|_{p}^{p}\right\}\\
&\;=\left\{u\in\mathcal{P}_{a,\mu}:(\Phi_{u}^{\mu})
^{\prime\prime}(0)<0\right\};\\
\mathcal{P}_{0}&:=\left\{u\in\mathcal{P}_{a,\mu}:2s_{1}^2
|(-\Delta)^{\frac{s_1}{2}}u|_{2}^{2}+2 s_{2}^2|(-\Delta)^{\frac{s_2}{2}}u|_{2}^{2}=\mu q\kappa_{q}^{2}|u|_{q}^{q}+p\kappa_{p}^{2}|u|_{p}^{p}\right\}\\
&\;=\left\{u\in\mathcal{P}_{a,\mu}:(\Phi_{u}^{\mu})^{\prime\prime}(0)=0\right\}.
\end{aligned}
\end{equation*}
As we will see, this decomposition will play a crucial role in this article.

\section{$L^2$-supercritical leading term with a subcritical perturbation}\label{sec:sobolev-subcritical}
 In this section, for $N\geq2$ and $0<s_2<s_1<1$, we focus on the case of $2<q< 2+\frac{4s_2}{N}<\bar{p}<p<2_{s_1}^*$ and
 prove that problem \eqref{eqn:Mixed-fractioanl-CN}  possesses two solutions, that is, Theorem \ref{Thm:q-p-mass-subcritical-supcritical} is obtained.
From now on, we omit the dependence of $E_\mu,\;P_\mu,\; \mathcal{P}_{a,\mu}$ and $\Phi_{u}^{\mu}$ on these
quantities, writing simply $E,\;P,\;\mathcal{P}$ and $\Phi_u$.

\subsection{Compactness of Palais-Smale sequences}
\ \\
In this subsection, we prove a prior compactness result for special Palais-Smale sequences (in the sense that $\{u_n\}$ is Palais-Smale sequences for $E|_{S_a}$ and satisfies $P(u_n)\rightarrow0$ as $n\rightarrow\infty$).

\begin{lemma}\label{lem:mass-supercritical-setting-PS-compact}
Let $N\geq 2$, $\mu>0$ and $2<q<\bar p<p<2_{s_1}^{\ast}$. Let $\{u_n\}\subset S_{a,r}$ be a Palais-Smale sequence for $E|_{S_a}$ at level $c\neq 0$, and suppose in addition that:
\begin{itemize}
  \item [$(i)$] $P(u_n)\to0\mathrm{~as~}n\to\infty;$
  \item [$(ii)$]  $(A_0)$ holds if $p> \frac{2N}{N-2s_2}$.
\end{itemize}
Then up to a subsequence $u_{n}\to u$ strongly in $H^{s_1}(\mathbb{R}^{N})$ and $u\in S_a$ is a  radial solution to \eqref{eqn:Mixed-fractioanl-CN} for some $\lambda<0$.
\end{lemma}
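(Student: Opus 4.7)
The plan is to follow the classical four-step scheme for compactness of constrained Palais--Smale sequences: boundedness, passage to a weak limit with strong convergence of the lower-order norms, identification and sign of the Lagrange multiplier, and upgrading to strong convergence via coercivity.

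\emph{Step 1: boundedness of $\{u_n\}$ in $H^{s_1}$.} I would form a linear combination of $E(u_n)=c+o(1)$ and $P(u_n)=o(1)$ that kills the $|u|_p^p$-term and leaves strictly positive coefficients on $|(-\Delta)^{\frac{s_1}{2}}u_n|_2^2$ and $|(-\Delta)^{\frac{s_2}{2}}u_n|_2^2$. The combination $\kappa_p E(u_n)-\frac{1}{p}P(u_n)$ works, since $p\kappa_p>2s_1>2s_2$ because $p>\bar p$, and the residual $|u_n|_q^q$ term is controlled by Gagliardo--Nirenberg (Lemma~\ref{lem:GN-inequality}) with exponent $N(q-2)/(4s_1)<1$ (as $q<\bar p$), producing a sublinear contribution. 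Young's inequality then yields a uniform $H^{s_1}$-bound.

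\emph{Step 2: weak limit and Lagrange multipliers.} Extract $u_n\rightharpoonup u$ in $H^{s_1}_r(\mathbb R^N)$; the compact embedding $H^{s_1}_r\hookrightarrow L^r$ for $r\in(2,2^*_{s_1})$ yields $u_n\to u$ in $L^p\cap L^q$ (both exponents being strictly Sobolev-subcritical). The Palais--Smale condition on $S_a$ provides $\lambda_n\in\mathbb R$ with $E'(u_n)-\lambda_n u_n\to 0$ in $H^{-s_1}$ and $\lambda_n a^2=\langle E'(u_n),u_n\rangle+o(1)$, which is bounded by Step~1. Along a further subsequence $\lambda_n\to\lambda$, and passing to the limit in the equation shows that $u$ solves \eqref{eqn:Mixed-fractioanl-CN} weakly and, consequently, satisfies the Pohozaev identity $P_\mu(u)=0$.

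\emph{Step 3: sign of $\lambda$ --- the main obstacle.} First rule out $u\equiv 0$: if $u=0$, strong $L^p\cap L^q$-convergence together with $P(u_n)\to 0$ would force $|(-\Delta)^{\frac{s_j}{2}}u_n|_2\to 0$ for $j=1,2$, hence $E(u_n)\to 0=c$, contradicting $c\neq 0$. Thus $u\not\equiv 0$ and $|u|_q^q>0$. Using Pohozaev to eliminate $|(-\Delta)^{\frac{s_2}{2}}u|_2^2$ from the identity $\lambda|u|_2^2=|(-\Delta)^{\frac{s_1}{2}}u|_2^2+|(-\Delta)^{\frac{s_2}{2}}u|_2^2-|u|_p^p-\mu|u|_q^q$ gives
$$
\lambda|u|_2^2=\frac{s_2-s_1}{s_2}\,|(-\Delta)^{\frac{s_1}{2}}u|_2^2+\frac{\kappa_p-s_2}{s_2}\,|u|_p^p+\mu\,\frac{\kappa_q-s_2}{s_2}\,|u|_q^q.
$$
The first and third terms are strictly negative (since $s_2<s_1$, and $\kappa_q<s_2$ because $q<\bar p<2^*_{s_2}$). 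In the range $\bar p<p\le 2^*_{s_2}$ the middle coefficient is $\le 0$, so $\lambda<0$ is immediate. When $p>2^*_{s_2}$ the middle term is positive and must be dominated by the negative first one: I would apply Gagliardo--Nirenberg, $|u|_p^p\le C_{N,s_1,p}|(-\Delta)^{\frac{s_1}{2}}u|_2^{N(p-2)/(2s_1)}a^{p-N(p-2)/(2s_1)}$, and use the smallness assumption $(A_0)$ -- which is calibrated precisely so that $(\kappa_p-s_2)C_{N,s_1,p}a^{p-N(p-2)/(2s_1)}|(-\Delta)^{\frac{s_1}{2}}u|_2^{N(p-2)/(2s_1)-2}<s_1-s_2$ -- to obtain $\lambda<0$. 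This sign argument is the essential difficulty of the lemma.

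\emph{Step 4: strong convergence.} Testing $E'(u_n)-\lambda_n u_n\to 0$ in $H^{-s_1}$ against $u_n-u$, the nonlinear terms vanish by strong $L^p\cap L^q$-convergence and the quadratic cross-products converge weakly, yielding
$$
\bigl[|(-\Delta)^{\frac{s_1}{2}}u_n|_2^2-|(-\Delta)^{\frac{s_1}{2}}u|_2^2\bigr]+\bigl[|(-\Delta)^{\frac{s_2}{2}}u_n|_2^2-|(-\Delta)^{\frac{s_2}{2}}u|_2^2\bigr]=\lambda(a^2-|u|_2^2)+o(1).
$$
The left-hand side is non-negative by weak lower semicontinuity, while the right-hand side is non-positive since $\lambda<0$ and $|u|_2\le a$. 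Both sides therefore vanish, which simultaneously gives $|u|_2=a$ (so $u\in S_a$) and norm-convergence in $\dot H^{s_1}\cap\dot H^{s_2}$. Combined with weak convergence, this upgrades to $u_n\to u$ strongly in $H^{s_1}(\mathbb R^N)$, completing the proof.
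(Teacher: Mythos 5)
Your Steps 1, 2 and 4 are correct and essentially equivalent to the paper's argument: the paper eliminates $|u_n|_p^p$ from $E(u_n)$ using $P(u_n)\to 0$ rather than forming $\kappa_p E-\tfrac1p P$, and in Step 4 it tests $(dE(u_n)-dE(u))[u_n-u]$ instead of passing to norms, but these are cosmetic differences. Your non-vanishing argument for $u\not\equiv 0$ is fine (and more explicit than the paper's). The problem is Step 3 in the case $p>\frac{2N}{N-2s_2}$, which is the heart of the lemma.

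From your identity you need $(\kappa_p-s_2)|u|_p^p<(s_1-s_2)|(-\Delta)^{\frac{s_1}{2}}u|_2^2+\mu(s_2-\kappa_q)|u|_q^q$, and after Gagliardo--Nirenberg this amounts to $(\kappa_p-s_2)C_{N,s_1,p}\,a^{p-\frac{N(p-2)}{2s_1}}\,|(-\Delta)^{\frac{s_1}{2}}u|_2^{\frac{N(p-2)}{2s_1}-2}<s_1-s_2$. Since $\frac{N(p-2)}{2s_1}-2>0$, this is a constraint on $|(-\Delta)^{\frac{s_1}{2}}u|_2$ \emph{from above}, and $(A_0)$ --- a condition on $a$ alone --- cannot deliver it without an a priori upper bound on that norm, which you never produce. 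The paper proceeds by contradiction: assuming $\lambda\ge 0$, it derives from the Nehari and Pohozaev identities of the limit $u$ both a lower bound $|(-\Delta)^{\frac{s_1}{2}}u|_2\ge\bigl(\tfrac{s_1-s_2}{C_{N,s_1,p}a^{p-N(p-2)/(2s_1)}}\bigr)^{\frac{2s_1}{N(p-2)-4s_1}}$ (essentially your inequality) \emph{and} an upper bound $|(-\Delta)^{\frac{s_1}{2}}u|_2\le\bigl(\tfrac{\kappa_p-s_2}{s_1-\kappa_p}\bigr)^{\frac{s_1}{2(s_1-s_2)}}a$, the latter obtained from the interpolation inequality $|(-\Delta)^{\frac{s_2}{2}}u|_2^2\le|(-\Delta)^{\frac{s_1}{2}}u|_2^{\frac{2s_2}{s_1}}|u|_2^{\frac{2(s_1-s_2)}{s_1}}$ applied to the $\dot H^{s_2}$-term under the hypothesis $\lambda\ge0$; condition $(A_0)$ is exactly what makes the two bounds incompatible, which is why it contains the factor $\bigl(\tfrac{s_1-\kappa_p}{\kappa_p-s_2}\bigr)^{\frac{s_1}{2(s_1-s_2)}}$. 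This interpolation step and the resulting upper bound are the missing idea; without them your ``calibrated precisely'' claim is not justified. A minor further point: your reason for $\kappa_q<s_2$ (``$q<\bar p<2_{s_2}^*$'') is false in general, since $\bar p=2+\tfrac{4s_1}{N}$ may exceed $\tfrac{2N}{N-2s_2}$; the correct source of this inequality in the application is $q<2+\tfrac{4s_2}{N}$.
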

\begin{proof}
The proof is divided into four steps.

\textbf{\emph{Step} 1. Boundedness of $\{u_n\}$ in $H^{s_1}(\mathbb{R}^N)$}.
Note that $P(u_{n})\to 0$, then we observe that
 \begin{equation*}
|u_n|_{p}^{p}=\frac{s_{1}}{\kappa_{p}}|(-\Delta)^{\frac{s_1}{2}} u_n|_{2}^{2}+\frac{s_{2}}{\kappa_{p}}|(-\Delta)^{\frac{s_2}{2}}u_n|_{2}^{2}
-\frac{\mu \kappa_{q}}{\kappa_{p}}|u_n|_{q}^{q}+o_{n}(1).
\end{equation*}
Thus, we have
\begin{equation*}
E(u_n)=\bigg(\frac{1}{2}-\frac{s_{1}}{\kappa_{p}p}\bigg)|(-\Delta)^{\frac{s_1}{2}} u_n|_{2}^{2}+\bigg(\frac{1}{2}-\frac{s_{2}}{\kappa_{p}p}\bigg)|(-\Delta)^{\frac{s_2}{2}}u_n|_{2}^{2}
-\frac{\mu}{q}\bigg(1-\frac{\kappa_{q} q}{\kappa_{p} p}\bigg)|u_n|_{q}^{q}+o_{n}(1).
\end{equation*}
Since $2<q<\bar p<p<2_{s_1}^{\ast}$, we know
all the coefficients inside the brackets are positive. Then, by \eqref{eqn:G-N-equality} with $s=s_1$, we infer that
\begin{equation*}
c+1\geq E(u_n)\geq \bigg(\frac{1}{2}-\frac{s_{1}}{\kappa_{p}p}\bigg)|(-\Delta)^{\frac{s_1}{2}} u_{n}|_{2}^{2}-
\frac{\mu}{q}\bigg(1-\frac{\kappa_{q}q}{\kappa_{p} p}\bigg)a^{q-\frac{N(q-2)}{2s_{1}}}C_{N,s_{1},q}
|(-\Delta)^{\frac{s_{1}}{2}}u_n|_{2}^{\frac{N(q-2)}{2s_1}},
\end{equation*}
which implies that $\{u_n\}$ is bounded in $H^{s_1}(\mathbb{R}^N)$ due to $\frac{N(q-2)}{2s_1}<2$.

\textbf{\emph{Step} 2. There exists $\lambda\in\mathbb{R}$ such that $\lambda_n\to \lambda$}.
Since the embedding $H_{r}^{s_1}(\mathbb{R}^N)\hookrightarrow L^s(\mathbb{R}^N)$  is compact for $s\in(2,2_{s_{1}}^{\ast})$, then we
deduce that there exists $u\in H_{r}^{s_1}(\mathbb{R}^N)$ such that, up to a subsequence, $u_{n}\rightharpoonup u$ weakly in $H^{s_1}(\mathbb{R}^{N})$,
$u_{n}\to u$ strongly in $L^s(\mathbb{R}^N)$ for $s\in(2,2_{s_{1}}^{\ast})$ and a.e. in $\mathbb{R}^N$. By Step 1, we know that $\{u_{n}\}$ is a bounded Palais-Smale
sequence of $E|_{S_a}$, then by the Lagrange multipliers rule there exists $\lambda_n \in \mathbb{R}$ such that
\begin{equation}\label{eqn:un-Lagrange-multipliers}\small
\begin{aligned}
  &\int_{{\mathbb{R}^{N}}}(-\Delta)^{\frac{s_1}{2}}u_n
  (-\Delta)^{\frac{s_1}{2}}\varphi+(-\Delta)^{\frac{s_2}{2}}
  u_n(-\Delta)^{\frac{s_2}{2}}\varphi
  -\lambda_{n}u_{n}{\varphi}-\mu|u_{n}|^{q-2}u_{n}{\varphi}
  -|u_{n}|^{p-2}u_{n}{\varphi}dx\\
  &\ \ =o_{n}(1)\|\varphi\|_{H^{s_1}(\mathbb{R}^{N})}
\end{aligned}
\end{equation}
for every $\varphi\in H^{s_1}(\mathbb{R}^{N})$. Taking $\varphi=u_n$, we get
\begin{equation*}
  \lambda_n a^2 =|(-\Delta)^{\frac{s_1}{2}} u_n|_{2}^{2}+|(-\Delta)^{\frac{s_2}{2}} u_n|_{2}^{2}-\mu|u_{n}|_{q}^{q}-|u_{n}|_{p}^{p}+o_{n}(1)
\end{equation*}
and then $\{\lambda_n\}$ is bounded. Hence, up to a subsequence, $\lambda_{n}\to \lambda\in \mathbb{R}$.

\textbf{\emph{Step} 3.  $\lambda<0$.}
First, by $P(u_{n})\to 0$ and the compactness of $H_{r}^{s_1}(\mathbb{R}^N)\hookrightarrow L^s(\mathbb{R}^N)$, one can easily check that $u\not\equiv0$. Then we verify that $\lambda<0$.
 Recalling that $P(u_n)\to0$, we have
\begin{equation}\label{eqn:Pu-0-yingyongdao-1}
  \lambda_n a^2 =\bigg(1-\frac{s_1}{\kappa_p}\bigg)|(-\Delta)^{\frac{s_1}{2}} u_n|_{2}^{2}+\bigg(1-\frac{s_2}{\kappa_p}\bigg)|(-\Delta)^{\frac{s_2}{2}} u_n|_{2}^{2}+
  \mu\bigg(\frac{\kappa_q}{\kappa_p}-1\bigg)|u_{n}|_{q}^{q}+o_{n}(1).
\end{equation}
Note that $ \kappa_p-s_1<0$ and $\kappa_p>\kappa_q$.
Then if $1-\frac{s_2}{\kappa_p}\leq0$, i.e., $p\leq \frac{2N}{N-2s_2}$, it is clear that $\lambda<0$. Now we consider $1-\frac{s_2}{\kappa_p}>0$ and suppose by contradiction that $\lambda\geq0$.
 It
 follows from \eqref{eqn:un-Lagrange-multipliers} that $u$ satisfies
 $$
 (-\Delta)^{s_{1}} u+(-\Delta)^{s_{2}} u=\lambda u+\mu |u|^{q-2}u+|u|^{p-2}u \ \text{in}\;{\mathbb{R}^{N}}.
 $$
 Then we know that
 \begin{equation}\label{eqn:u-pohozaev}
s_{1}|(-\Delta)^{\frac{s_1}{2}} u|_{2}^{2}+s_{2}|(-\Delta)^{\frac{s_2}{2}}u|_{2}^{2}
=\kappa_{p}|u|_{p}^{p}+\mu\kappa_q |u|_{q}^{q}
\end{equation}
and
 \begin{equation}\label{eqn:u-nehari}
|(-\Delta)^{\frac{s_1}{2}} u|_{2}^{2}+|(-\Delta)^{\frac{s_2}{2}}u|_{2}^{2}
=|u|_{p}^{p}+\mu |u|_{q}^{q}+\lambda|u|_2^2.
\end{equation}
On the one hand, since $\kappa_q-s_2<0$ and $1-\frac{s_2}{\kappa_p}>0$, we can infer from \eqref{eqn:u-pohozaev}-\eqref{eqn:u-nehari}
that
$$
\begin{aligned}
(s_1-s_2)|(-\Delta)^{\frac{s_1}{2}} u|_{2}^{2}
&=(\kappa_p-s_2)|u|_p^p+\mu(\kappa_q-s_2)|u|_q^q-\lambda s_2|u|_2^2\\
&\leq (\kappa_p-s_2)|u|_p^p\\
&\leq C_{N,s_1,p}a^{p-\frac{N(p-2)}{2s_1}}|(-\Delta)^{\frac{s_1}{2}} u|_{2}^{\frac{N(p-2)}{2s_1}},
\end{aligned}
$$
where we have used \eqref{eqn:G-N-equality} and the fact that $|u|_2^2\leq a^2$.
Then there holds
\begin{equation}\label{eqn:u-s-1-lower}
|(-\Delta)^{\frac{s_1}{2}} u|_{2}\geq \left( \frac{s_1-s_2}{C_{N,s_1,p}a^{p-\frac{N(p-2)}{2s_1}}}\right)
^{\frac{2s_1}{N(p-2)-4s_1}}.
\end{equation}
On the other hand, using \eqref{eqn:u-pohozaev}-\eqref{eqn:u-nehari} again, we deduce that
$$
\begin{aligned}
(\kappa_p-s_2)|(-\Delta)^{\frac{s_2}{2}} u|_{2}^{2}&=-(\kappa_p-s_1)|(-\Delta)^{\frac{s_1}{2}} u|_{2}^{2}
+\mu(\kappa_p-\kappa_q)|u|_q^q+\lambda\kappa_p|u|_2^2\\
&\geq-(\kappa_p-s_1)|(-\Delta)^{\frac{s_1}{2}} u|_{2}^{2}.
\end{aligned}
$$
By interpolation inequality
\begin{equation}\label{eqn:inter-inequality}
\begin{aligned}
\int_{\mathbb{R}^N}|(-\Delta)^{\frac{s_2}{2}}u|^2 dx\leq\left(\int_{\mathbb{R}^N}|(-\Delta)^{\frac{s_1}{2}}u|^2dx\right)^{\frac{s_2}{s_1}}
\left(\int_{\mathbb{R}^N}|u|^2dx\right)^{\frac{s_1-s_2}{s_1}},
\end{aligned}
\end{equation}
we further obtain that
$$
-(\kappa_p-s_1)|(-\Delta)^{\frac{s_1}{2}} u|_{2}^{2}\leq(\kappa_p-s_2)a^{\frac{2(s_1-s_2)}{s_1}}
|(-\Delta)^{\frac{s_1}{2}} u|_{2}^{\frac{2s_2}{s_1}},
$$
which implies that
\begin{equation}\label{eqn:u-s-1-upper}
|(-\Delta)^{\frac{s_1}{2}} u|_{2}\leq \left( \frac{(\kappa_p-s_2)a^{\frac{2(s_1-s_2)}{s_1}}}{-(\kappa_p-s_1)}
\right)^{\frac{s_1}{2(s_1-s_2)}}=
\left(\frac{\kappa_p-s_2}{s_1-\kappa_p}
\right)^{\frac{s_1}{2(s_1-s_2)}}a.
\end{equation}
Thus, combining \eqref{eqn:u-s-1-lower} and \eqref{eqn:u-s-1-upper}, we can get a contradiction
if
$$\left( \frac{s_1-s_2}{C_{N,s_1,p}a^{p-\frac{N(p-2)}{2s_1}}}\right)
^{\frac{2s_1}{N(p-2)-4s_1}}>\left(\frac{\kappa_p-s_2}{s_1-\kappa_p}
\right)^{\frac{s_1}{2(s_1-s_2)}}a,
$$
that is,
$$
\left(\frac{s_1-s_2}{C_{N,s_1,p}}\right)
^{\frac{2s_1}{N(p-2)-4s_1}}
\left(\frac{s_1-\kappa_p}{\kappa_p-s_2}
\right)^{\frac{s_1}{2(s_1-s_2)}}
>a^{\frac{2(p-2)s_1}{N(p-2)-4s_1}}.
$$
This is the condition $(A_0)$. Thus, $\lambda<0$.

\textbf{\emph{Step} 4. $u_n\to u$ in $H^{s_1}(\mathbb{R}^{N})$}. By weak convergence, \eqref{eqn:un-Lagrange-multipliers} implies that
\begin{equation}\label{eqn:dE-u-lamda-weakly-convergence}
dE(u)\varphi-\lambda\int_{{\mathbb{R}^N}}u\varphi dx=0
\end{equation}
for every $\varphi\in H^{s_1}(\mathbb{R}^{N})$. Choosing $\varphi=u_n - u$ in \eqref{eqn:un-Lagrange-multipliers} and \eqref{eqn:dE-u-lamda-weakly-convergence}, and subtracting, we get
$$
(dE(u_n)-dE(u))[u_n-u]-\lambda\int_{\mathbb{R}^N}|u_n-u|^2dx=o_n (1).
$$
Noting that $u_{n}\to u$ strongly in $L^r(\mathbb{R}^N)$ for $r\in(2,2_{s_{1}}^{\ast})$, we infer that
 $$
|(-\Delta)^{\frac{s_1}{2}} (u_n - u)|_{2}^{2}+|(-\Delta)^{\frac{s_2}{2}} (u_n - u)|_{2}^{2}-\lambda|(u_n - u)|_{2}^{2}=o_n (1),
 $$
 which indicates that $u_n\to u$ in $H^{s_1}(\mathbb{R}^{N})$ due to $\lambda<0$. This completes the proof.
\end{proof}
\subsection{The geometry and existence of a local minimizer for functional $E|_{S_a}$}\label{sub:properties}
\ \\
For the constrained functional $E|_{S_a}$, we can infer from \eqref{eqn:G-N-equality} that
 \begin{equation}\label{eqn:E-mu-G-N-1}\small
 \begin{aligned}
 E(u)\geq\frac{1}{2}|(-\Delta)^{\frac{s_{1}}{2}} u|_{2}^{2}-
\frac{C_{N,s_1,p}}{p}a^{p-\frac{N(p-2)}{2 s_1}}|(-\Delta)^{\frac{s_{1}}{2}}u|_{2}^{\frac{N(p-2)}{2 s_1}}-\frac{\mu C_{N,s_1,q}}{q}a^{q-\frac{N(q-2)}{2 s_1}}|(-\Delta)^{\frac{s_{1}}{2}}u|_{2}^{\frac{N(q-2)}{2 s_1}}
 \end{aligned}
 \end{equation}
for every  $u\in S_a$.
Therefor, to understand the geometry of the functional $E|_{S_a}$, it is
natural to consider the function
 $h:\mathbb{R}^{+}\to\mathbb{R}$
$$
h(t):=\frac{1}{2}t^2 -\frac{C_{N,s_1,p}}{p}a^{p-\frac{N(p-2)}{2 s_1}}t^{\frac{N(p-2)}{2 s_1}}-\frac{\mu C_{N,s_1,q}}{q}a^{q-\frac{N(q-2)}{2 s_1}}t^{\frac{N(q-2)}{2 s_1}}.
$$
Since $2<q< 2+\frac{4s_2}{N}< 2+\frac{4s_1}{N}<p<2_{s_1}^{*}$, then ${\frac{N(q-2)}{2 s_1}}<2<{\frac{N(p-2)}{2 s_1}}$, which implies that $h(t)=0^-$ as $t\rightarrow 0^+$ and $h(t)\rightarrow-\infty$ as $ t\rightarrow \infty$.
Now we give the property of $h(t)$ with respect to parameters $a,\mu>0$, which leads to our hypothesis $(A_1)$
\begin{lemma}\label{lem:h-dayu-0-1}
Under assumption $(A_1)$,  the function $h$ has a local strict minimum at negative level and a global strict maximum at positive level. Moreover, there exist $0<R_0<R_1$, both depending on $a$ and $\mu$, such that $h(R_0)=0=h(R_1)$ and $ h(t)>0$ if and only if $t\in(R_0,R_1).$
\end{lemma}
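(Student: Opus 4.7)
The plan is to factor out the lowest-power term. Setting $\alpha := \tfrac{N(p-2)}{2s_1}$, $\beta := \tfrac{N(q-2)}{2s_1}$ (so $0<\beta<2<\alpha$) and $A := \tfrac{C_{N,s_1,p}}{p}a^{p-\alpha}$, $B := \tfrac{\mu C_{N,s_1,q}}{q}a^{q-\beta}$, I write $h(t) = t^\beta g(t)$ with
$$
g(t) := \tfrac{1}{2}\,t^{2-\beta} - A\,t^{\alpha-\beta} - B.
$$
Since $t^\beta>0$ on $(0,\infty)$, the sign and zeros of $h$ coincide with those of $g$. Now $g(0)=-B<0$ and $g(t)\to -\infty$ as $t\to\infty$; solving $g'(t)=0$ gives the unique positive critical point $t_{\max} = \bigl(\tfrac{2-\beta}{2A(\alpha-\beta)}\bigr)^{1/(\alpha-2)}$, and a short monotonicity check ($g'>0$ for $t$ near $0$, $g'<0$ for $t$ large) shows $g$ is strictly increasing on $(0,t_{\max})$ and strictly decreasing on $(t_{\max},\infty)$, so $t_{\max}$ is a strict global maximum of $g$.

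Using the identity $A\,t_{\max}^{\alpha-\beta} = \tfrac{2-\beta}{2(\alpha-\beta)}\,t_{\max}^{2-\beta}$ produced by $g'(t_{\max})=0$, the max value simplifies to $g(t_{\max}) = \tfrac{\alpha-2}{2(\alpha-\beta)}\,t_{\max}^{2-\beta} - B$. Plugging in the expressions for $A$, $B$ and $t_{\max}$ and converting the exponents back via $2-\beta = \tfrac{4s_1-N(q-2)}{2s_1}$, $\alpha-2 = \tfrac{N(p-2)-4s_1}{2s_1}$, $\alpha-\beta = \tfrac{N(p-q)}{2s_1}$, the inequality $g(t_{\max})>0$ rearranges into precisely hypothesis $(A_1)$. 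Hence, under $(A_1)$, $g$ has exactly two positive zeros $R_0 < t_{\max} < R_1$, with $g>0$ on $(R_0, R_1)$ and $g<0$ on $(0,R_0)\cup (R_1,\infty)$; the same holds for $h$, giving the last assertion of the lemma.

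For the two extrema claimed: $h$ is smooth on $(0,\infty)$ with $h(t)\sim -Bt^\beta$ so $h(t)\to 0^-$ as $t\to 0^+$, $h(R_0)=0$, and $h<0$ on $(0,R_0)$; by continuity $h$ attains its infimum on $[0,R_0]$ at an interior point, yielding a strict local minimum at a negative level. Similarly, since $h>0$ on $(R_0,R_1)$ with $h(R_0)=h(R_1)=0$ and $h<0$ elsewhere on $(0,\infty)$ (with $h\to-\infty$ at $\infty$), $h$ attains its supremum in $(R_0,R_1)$ at a positive value, which is a strict global maximum of $h$ on $(0,\infty)$. The only nontrivial step is the algebraic matching of $g(t_{\max})>0$ with $(A_1)$; this is routine bookkeeping with the exponent substitutions noted above, and I anticipate no conceptual difficulty.
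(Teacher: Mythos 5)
Your proposal is correct and follows essentially the same route as the paper: writing $h(t)=t^{\beta}g(t)$ with $g=\varphi-B$ is exactly the paper's reduction of $h>0$ to $\varphi(t)>\frac{\mu C_{N,s_1,q}}{q}a^{q-\frac{N(q-2)}{2s_1}}$, and your $t_{\max}$ and the identity $g(t_{\max})>0\Leftrightarrow(A_1)$ coincide with the paper's $\bar t$ and $\varphi(\bar t)$ computation. The concluding sign analysis yielding the two zeros, the negative local minimum on $(0,R_0)$ and the positive global maximum is likewise the paper's argument.
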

\begin{proof}
Let
$$
\varphi(t):=\frac{1}{2}t^{\frac{4s_1 -N(q-2)}{2 s_1}} -\frac{C_{N,s_1,p}}{p}a^{p-\frac{N(p-2)}{2 s_1}}t^{\frac{N(p-q)}{2 s_1}},
$$
then we know that $h(t)>0$ if and only if
$$
\varphi(t)>\frac{\mu C_{N,s_1,q}}{q}a^{q-\frac{N(q-2)}{2 s_1}}.
$$
Moreover, the direct calculation implies that $\varphi$ has a unique critical point
 $$
 \bar{t}=\bigg(\frac{p(4s_1-N(q-2))}{2NC_{N,s_1,p}(p-q)a^{\frac{2p s_1-N(p-2)}{2 s_1}}}\bigg)^{\frac{2s_1}{N(p-2)-4s_1}},
 $$
which is a global maximum point at positive level.
Note that
 \begin{equation*}
  \begin{aligned}
 \varphi(\bar{t})&=\frac{1}{2}\left(\frac{p(4s_1-N(q-2))}{2NC_{N,s_1,p}(p-q)a^{\frac{2p s_1-N(p-2)}{2 s_1}}}\right)^{\frac{4s_1-N(q-2)}{N(p-2)-4s_1}}\\
 &\qquad-\frac{C_{N,s_1,p}}{p}a^{p-\frac{N(p-2)}{2 s_1}}\left(\frac{p(4s_1-N(q-2))}{2NC_{N,s_1,p}(p-q)a^{\frac{2p s_1-N(p-2)}{2 s_1}}}\right)^{\frac{N(p-q)}{N(p-2)-4s_1}}\\
 &=a^{\frac{2p s_1-N(p-2)}{2 s_1}\cdot{\frac{N(q-2)-4s_1}{N(p-2)-4s_1}}}
 \left(\frac{p(4s_1-N(q-2))}{2NC_{N,s_1,p}(p-q)}\right)
 ^{\frac{4s_1-N(q-2)}{N(p-2)-4s_1}}\left(\frac{N(p-2)-4s_1}
 {2N(p-q)}\right).
 \end{aligned}
 \end{equation*}
 Therefore, if $(A_1)$ holds, we have $\varphi(\bar{t})>\frac{\mu C_{N,s_1,q}}{q}a^{q-\frac{N(q-2)}{2 s_1}}$, which implies that
 $h$ is positive on an open interval $(R_0, R_1)$.
 This also means that $h$ has a global maximum at positive level in $(R_0, R_1)$. On the other hand, since $h(t)\rightarrow0^-$ as $t\rightarrow 0^+$, we can conclude that there exists a local minimum point at negative level in
 $(0, R_0)$. Moreover, it is easy to check that $h$ has exactly two critical points. Then the proof is complete.
\end{proof}
In the following, we prove that $\mathcal P_0=\emptyset$, which implies that
the manifold $\mathcal{P}$ can be decomposed into two components, that is, $\mathcal{P}_+ $ and $\mathcal{P}_- $.
\begin{lemma}\label{lem:P-natural-subcritical}
If assumption $(A_2)$ holds, then $\mathcal{P}_0=\emptyset$ and $\mathcal{P}$ is a smooth manifold of codimension $2$ in $H^{s_1}(\mathbb{R}^{N})$. In particular, $\mathcal P$ is a natural constraint.
\end{lemma}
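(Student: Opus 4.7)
The plan is to verify in order: (a) $\mathcal{P}_0=\emptyset$, (b) the differential of the defining constraint map has rank two at every point of $\mathcal{P}$, and (c) critical points of $E|_{\mathcal{P}}$ are automatically critical points of $E|_{S_a}$.

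For (a), I would argue by contradiction: suppose $u\in\mathcal{P}_0$ and set $A:=|(-\Delta)^{s_1/2}u|_2^2$, $B:=|(-\Delta)^{s_2/2}u|_2^2$, $C:=|u|_p^p$, $D:=|u|_q^q$. Then the two equations $P(u)=0$ and $(\Phi_u)''(0)=0$ read
\begin{equation*}
s_1A+s_2B=\kappa_pC+\mu\kappa_qD,\qquad 2s_1^2A+2s_2^2B=p\kappa_p^2C+\mu q\kappa_q^2D.
\end{equation*}
Multiplying the first by $p\kappa_p$ (resp.\ $q\kappa_q$) and subtracting from the second eliminates $C$ (resp.\ $D$), yielding
\begin{equation*}
s_1(p\kappa_p-2s_1)A+s_2(p\kappa_p-2s_2)B=\mu\kappa_q(p\kappa_p-q\kappa_q)D,
\end{equation*}
\begin{equation*}
s_1(2s_1-q\kappa_q)A+s_2(2s_2-q\kappa_q)B=\kappa_p(p\kappa_p-q\kappa_q)C.
\end{equation*}
Since $q<\tilde p<\bar p<p<2_{s_1}^*$, one checks that $2s_2>q\kappa_q$, $2s_1>q\kappa_q$, $p\kappa_p>2s_1>2s_2$, and $p\kappa_p>q\kappa_q$, so all coefficients are positive. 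Dropping the nonnegative $B$-terms on the left gives $A\le \frac{\kappa_p(p\kappa_p-q\kappa_q)}{s_1(2s_1-q\kappa_q)}C$ and $A\le\frac{\mu\kappa_q(p\kappa_p-q\kappa_q)}{s_1(p\kappa_p-2s_1)}D$. I would then plug in the Gagliardo--Nirenberg estimates from Lemma~\ref{lem:GN-inequality}, namely $C\le C_{N,s_1,p}a^{p-N(p-2)/(2s_1)}A^{N(p-2)/(4s_1)}$ and $D\le C_{N,s_1,q}a^{q-N(q-2)/(2s_1)}A^{N(q-2)/(4s_1)}$. Since $N(p-2)/(4s_1)>1>N(q-2)/(4s_1)$, the first yields a lower bound on $A^{1/2}$ and the second an upper bound; combining them and raising to suitable powers produces exactly the negation of condition $(A_2)$, a contradiction.

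For (b), consider $G:H^{s_1}(\mathbb{R}^N)\setminus\{0\}\to\mathbb{R}^2$, $G(u)=(|u|_2^2-a^2,\,P(u))$, so that $\mathcal{P}=G^{-1}(0,0)$. It suffices to show $dG(u)$ is surjective for every $u\in\mathcal{P}$. If not, there is $(\alpha,\beta)\neq(0,0)$ with $2\alpha u+\beta\nabla P(u)=0$ in $H^{-s_1}$; since $u\neq 0$, $\beta\neq 0$ and, normalising, $dP(u)[\varphi]=-2\alpha\langle u,\varphi\rangle_2$ for all $\varphi$. Test with $\varphi=\partial_t(t\star u)|_{t=0}$: because $t\star u\in S_a$ for every $t$, one has $\langle u,\varphi\rangle_2=0$, whereas by the chain rule together with Proposition~\ref{pro:t-p-uniquess} and the identification $P(t\star u)=\Phi_u'(t)$ one gets $dP(u)[\varphi]=(\Phi_u)''(0)\neq 0$ since $u\notin\mathcal{P}_0$. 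This contradiction forces $dG(u)$ to be surjective, so $\mathcal{P}$ is a smooth manifold of codimension $2$.

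For (c), if $u$ is a critical point of $E|_{\mathcal{P}}$ then by the Lagrange multiplier rule for $G$ there exist $\lambda,\gamma\in\mathbb{R}$ with $dE(u)=2\lambda\langle u,\cdot\rangle_2+\gamma\, dP(u)$. Testing again with the dilation generator $\varphi=\partial_t(t\star u)|_{t=0}$ gives $dE(u)[\varphi]=\Phi_u'(0)=P(u)=0$ and $\langle u,\varphi\rangle_2=0$, hence $0=\gamma(\Phi_u)''(0)$. Since $\mathcal{P}_0=\emptyset$, $\gamma=0$, and therefore $dE(u)=2\lambda\langle u,\cdot\rangle_2$, i.e.\ $u$ is a critical point of $E|_{S_a}$; this is precisely the natural constraint property.

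The main obstacle is step (a): one has to choose the right linear combinations of $P(u)=0$ and $(\Phi_u)''(0)=0$ so that $A$, $C$, $D$ can each be bounded by $A$ alone via Gagliardo--Nirenberg, and then keep track of all signs to recognise $(A_2)$ at the end. Steps (b) and (c) are then a routine consequence of $\mathcal{P}_0=\emptyset$ combined with the invariance of the $L^2$ norm under the dilation $t\star u$.
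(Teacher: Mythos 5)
Your treatment of part (a) is exactly the paper's argument: the same two linear combinations eliminating $|u|_p^p$ and $|u|_q^q$, the same sign checks, and the same Gagliardo--Nirenberg step producing an upper and a lower bound on $|(-\Delta)^{s_1/2}u|_2$ whose compatibility is precisely the negation of $(A_2)$. For (b) and (c) the paper simply cites \cite[Lemma 5.2]{SN2020} and the Lagrange multiplier rule; your versions are the standard arguments and are fine, with the one caveat that testing with $\varphi=\partial_t(t\star u)|_{t=0}=\tfrac N2 u+x\cdot\nabla u$ is only formal (this function need not lie in $H^{s_1}(\mathbb{R}^N)$) and should, as in Soave, be replaced by observing that linear dependence of $dP(u)$ and the mass constraint makes $u$ a weak solution of an elliptic equation whose Pohozaev identity then yields $(\Phi_u)''(0)=0$, contradicting $\mathcal{P}_0=\emptyset$.
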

\begin{proof}
Assume that there exists $u\in\mathcal{P}_0$, by the definition of $u\in\mathcal{P}_0$, we know that
$P(u)=0$ and $(\Phi_{u})^{\prime\prime}(0)=0$. Then we can deduce that
\begin{equation}\label{eqn:P-u-Phi-combine-1}
s_1(p\kappa_p-2s_1 )|(-\Delta)^{\frac{s_{1}}{2}} u|_{2}^{2}+s_2(p\kappa_p -2s_2 )|(-\Delta)^{\frac{s_{2}}{2}}u|_{2}^{2}=\mu\kappa_q(p\kappa_p -q\kappa_q )|u|_q ^q
\end{equation}
and
\begin{equation}\label{eqn:P-u-Phi-combine-2}
s_1(2s_1-q\kappa_q)|(-\Delta)^{\frac{s_{1}}{2}} u|_{2}^{2}+s_2(2s_2 -q\kappa_q)|(-\Delta)^{\frac{s_{2}}{2}}u|_{2}^{2}=\kappa_p(p\kappa_p -q\kappa_q  )|u|_p ^p.
\end{equation}
Since  $2<q<2+\frac{4s_2}{N}$, it follows from  \eqref{eqn:G-N-equality} that
\begin{equation}\label{eqn:P-u-Phi-combine-3}
|(-\Delta)^{\frac{s_{1}}{2}} u|_{2}^{\frac{4s_1 -N(q-2)}{2 s_1}}\leq\frac{\mu C_{N,s_1,q} \kappa_q(p\kappa_p -q\kappa_q )}{ s_1(p\kappa_p-2s_1) }a^{q-\frac{N(q-2)}{2 s_1}}
\end{equation}
and
\begin{equation}\label{eqn:P-u-Phi-combine-4}
|(-\Delta)^{\frac{s_{1}}{2}} u|_{2}^{\frac{N(p-2)-4s_1 }{2 s_1}}\geq\frac{s_1(2s_1 -q\kappa_q)}{C_{N,s_1,p} \kappa_p(p\kappa_p - q\kappa_q)}a^{\frac{N(p-2)}{2 s_1}-p}.
\end{equation}
From  \eqref{eqn:P-u-Phi-combine-3} and \eqref{eqn:P-u-Phi-combine-4}, we get
\begin{equation*}\small
\begin{aligned}
\bigg(\frac{s_1(p\kappa_p -2s_1)}{\mu C_{N,s_1,q} \kappa_q(p\kappa_p -q\kappa_q )}\bigg)^{\frac{2 s_1}{4s_1 -N(q-2)}} \bigg(\frac{s_1(2s_1 -q\kappa_q)}{C_{N,s_1,p}\kappa_p(p\kappa_p - q\kappa_q )}\bigg)^{\frac{2 s_1}{N(p-2)-4s_1}}
\leq a^{\frac{2qs_1 -N(q-2)}{4s_1 -N(q-2)}+{\frac{2p s_1 -N(p-2)}{N(p-2)-4s_1}}},
\end{aligned}
\end{equation*}
which contradicts with $(A_{2})$.  This proves that $\mathcal{P}_0 =\emptyset$. Moreover, similar to the proof of \cite[Lemma 5.2]{SN2020}, we obtain that $\mathcal{P}$ is a smooth manifold of codimension $2$ in $H^{s_1}(\mathbb{R}^{N})$. In particular,
combining the above facts and Lagrange multipliers rule, it is easy to check that $\mathcal P$ is a natural constraint and then the proof is complete.
\end{proof}

\begin{lemma}\label{lem:t-u-P-Property}
 For every $u\in S_a$, the function $\Phi_u$  has exactly two critical points $\xi_u<t_u\in\mathbb{R}$ and two zeros $c_u <d_u \in\mathbb{R}$ with $\xi_u <c_u < t_u<d_u$. Moreover,
 \begin{itemize}
   \item [$(i)$] $\xi_{u}\star u\in \mathcal{P}_+$ and $t_{u}\star u\in \mathcal{P}_-$, and if $t\star u\in \mathcal{P}$, then either $t=\xi_u$ or $t=t_u$;
   \item [$(ii)$] $|(-\Delta)^{\frac{s_1}{2}}(t\star u)|_{2}\leq R_0$ for every $t\leq c_u$, and
  $$E(\xi_u\star u)=\min\left\{E(t\star u):t\in\mathbb{R}\mathrm{~and~}|(-\Delta)^{\frac{s_1}{2}}(t\star u)|_{2}<R_0 \right\}<0;$$
   \item [$(iii)$] $E\left(t_u\star u\right)=\max\left\{E\left(t\star u\right):t\in\mathbb{R}\right\}>0,$
   and $\Phi_u$ is strictly decreasing and concave on $(t_u, \infty)$. In particular, if $t_u <0$, then $P(u)<0$;
   \item [$(iv)$] The maps $u\in S\mapsto \xi_u\in\mathbb{R}$ and $u\in S\mapsto t_u\in\mathbb{R}$ are of class $C^1$.
 \end{itemize}
\end{lemma}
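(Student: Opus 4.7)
The plan is to analyze the fiber map $\Phi_u(t)=E(t\star u)$ from \eqref{E-fiber-maps} directly, exploiting the exponent ordering $q\kappa_q<2s_2<2s_1<p\kappa_p$ (which follows from $q<\bar p<p$). Reading off the dominant exponentials at the two ends, I would first establish the asymptotics $\Phi_u(t)\to 0^-$ as $t\to-\infty$ (driven by the $-\mu e^{q\kappa_q t}|u|_q^q/q$ term, whose exponent is smallest) and $\Phi_u(t)\to-\infty$ as $t\to+\infty$ (driven by $-e^{p\kappa_p t}|u|_p^p/p$). Applying \eqref{eqn:E-mu-G-N-1} to $t\star u\in S_a$ gives the pointwise bound $\Phi_u(t)\ge h(e^{s_1 t}|(-\Delta)^{s_1/2}u|_2)$, and Lemma \ref{lem:h-dayu-0-1} then supplies an open interval of $t$-values where $\Phi_u>0$. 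Put together, $\Phi_u$ has at least two zeros $c_u<d_u$ (negative outside, positive inside) and hence at least one local minimum $\xi_u\in(-\infty,c_u)$ at negative level and one global maximum $t_u\in(c_u,d_u)$ at positive level.

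The central step is upgrading \emph{at least} to \emph{exactly} two. I would regard $\Phi_u(t)$ and $\Phi_u'(t)$ as generalized polynomials in $y=e^t$ supported on the four exponents $q\kappa_q<2s_2<2s_1<p\kappa_p$; in both expressions the coefficient sign pattern is $-,+,+,-$, with exactly two sign changes. The generalized Descartes' rule for exponential sums (an induction using Rolle's theorem after factoring out a common $e^{\alpha_1 t}$) then gives at most two positive roots, so $\Phi_u$ and $\Phi_u'$ each have exactly two real zeros. Matching with the asymptotics at $\pm\infty$ yields $\Phi_u'<0$ on $(-\infty,\xi_u)\cup(t_u,\infty)$ and $\Phi_u'>0$ on $(\xi_u,t_u)$, so $\xi_u$ is a local minimum and $t_u$ is the global maximum, with $\Phi_u''(\xi_u)\ge 0$ and $\Phi_u''(t_u)\le 0$. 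Since Lemma \ref{lem:P-natural-subcritical} gives $\mathcal P_0=\emptyset$, both inequalities are strict, yielding the classification $\xi_u\star u\in\mathcal P_+$, $t_u\star u\in\mathcal P_-$ and the uniqueness statement in (i) via Proposition \ref{pro:t-p-uniquess}.

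For (ii), the map $\rho(t):=e^{s_1 t}|(-\Delta)^{s_1/2}u|_2$ is a strictly increasing continuous bijection of $\mathbb R$ onto $(0,\infty)$; the bound $\Phi_u\ge h\circ\rho$ together with $h>0$ on $(R_0,R_1)$ forces $(\rho^{-1}(R_0),\rho^{-1}(R_1))\subseteq(c_u,d_u)$, hence $\rho(t)\le R_0$ for all $t\le c_u$. The minimizer characterization $E(\xi_u\star u)=\min\{\Phi_u(t):\rho(t)<R_0\}$ then follows from the monotonicity described in the previous paragraph together with the bound $\Phi_u(\rho^{-1}(R_0))\ge h(R_0)=0>\Phi_u(\xi_u)$. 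For (iii), the same Descartes' argument applied to $\Phi_u''(t)$ (again sign pattern $-,+,+,-$) shows it has at most two zeros; since $\Phi_u''(\xi_u)>0$, $\Phi_u''(t_u)<0$, and $\Phi_u''(\pm\infty)\le 0$, both zeros must lie in $(\xi_u,t_u)$, so $\Phi_u''<0$ and $\Phi_u'<0$ on $(t_u,\infty)$, giving strict decrease and concavity. The implication "$t_u<0\Rightarrow P(u)<0$" is immediate from $\Phi_u'(0)=P(u)$ and the sign of $\Phi_u'$ on $(t_u,\infty)$. Finally, (iv) follows from the implicit function theorem applied to the map $(t,u)\mapsto \Phi_u'(t)$, which is $C^1$ in both variables, using the non-degeneracy $\Phi_u''(\xi_u)>0$ and $\Phi_u''(t_u)<0$ together with the continuous dependence of $t\star u$ on $(t,u)$ from Proposition \ref{pro:t-p-uniquess}.

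The main technical hurdle is making the Descartes'-type count for real exponential sums fully rigorous, since the inputs are not polynomials; the standard Rolle induction (dividing by $e^{\alpha_1 t}$ reduces to three exponentials with one fewer sign change) must be executed carefully, and its upper bound has to be paired correctly with the qualitative lower bound of at least two zeros. A secondary subtlety in (ii) is that the global maximum $t_u$ need not lie inside the $h$-positivity interval $(\rho^{-1}(R_0),\rho^{-1}(R_1))$, so the identification of $\xi_u$ as the unique minimizer on $\{\rho<R_0\}$ requires the extra estimate $\Phi_u(\rho^{-1}(R_0))\ge 0>\Phi_u(\xi_u)$ noted above rather than a direct monotonicity argument on the whole interval.
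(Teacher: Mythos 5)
Your proposal is correct and follows essentially the same route as the paper: bound $\Phi_u$ below by $h\circ\rho$, read off the asymptotics at $\pm\infty$, count critical points, and use $\mathcal P_0=\emptyset$ plus the implicit function theorem for (i) and (iv); the paper simply asserts that $\Phi_u'=0$ has at most two solutions, whereas you justify it (more carefully than the paper does) via the Descartes-type rule for exponential sums with sign pattern $-,+,+,-$. One small slip: since $\Phi_u''\to 0^-$ as $t\to-\infty$ and $\Phi_u''(\xi_u)>0$, one of the two inflection points lies in $(-\infty,\xi_u)$ rather than both lying in $(\xi_u,t_u)$, but this does not affect the conclusion that both precede $t_u$ and hence that $\Phi_u$ is concave and strictly decreasing on $(t_u,\infty)$.
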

\begin{proof}
Letting $u\in S_a$, by \eqref{eqn:E-mu-G-N-1}, we have
$$
\Phi_u (t)=E(t\star u)\geq h(|(-\Delta)^{\frac{s_1}{2}}(t\star u)|_{2})= h(e^{s_1 t}|(-\Delta)^{\frac{s_1}{2}}u|_ 2).
$$
 Thus, the $C^2$ function $\Phi_u$  is positive on $(\log(R_0/|(-\Delta)^{\frac{s_1}{2}}u|_ 2),\log(R_1/|(-\Delta)^{\frac{s_1}{2}}u|_ 2))$. Noting that
$2<q<2+\frac{4s_2}{N}< 2+\frac{4s_1}{N}<p$, we can infer that
 $\Phi_u (-\infty)=0^- , \Phi_u (\infty)=-\infty $
  and $\Phi_u ^{\prime}(t)=0$ has at most two
solutions.
  It follows that $\Phi_u$  has exactly two critical points $\xi_u<t_u$, where $\xi_u$ is a local minimum point on $(-\infty, \log(R_0/|(-\Delta)^{\frac{s_1}{2}}u|_ 2))$ at negative level, and $t_u$ is a global maximum point at positive
level.  By Proposition \ref{pro:t-p-uniquess}, we have $t\star u\in \mathcal{P}$ if and only if $\Phi_u ^{\prime}(t)=0$. Then, we conclude that
 $\xi_u \star u, t_u \star u\in \mathcal{P}$, and $t\star u\in\mathcal{P}$ implies $t\in\{\xi_u, t_u\}$. Moreover, since $\mathcal{P}_0 =\emptyset$, we also get that $\xi_u\star u\in \mathcal{P}_+$ and $t_u\star u\in \mathcal{P}_-$.

 It remains to prove $(ii)-(iv)$. The items $(ii)-(iii)$ are obvious due to the above arguments. Indeed, we clearly know that
 $\Phi_u$ has exactly two zeros $c_u <d_u$ with $\xi_u <c_u < t_u<d_u$; and being a $C^2$ function, $\Phi_u$  has exactly two inflection points. This implies that
 $\Phi_u$ is strictly decreasing and concave on $(t_u, \infty)$.
Hence, if $t_u <0$, then $P(u)=\Phi_u^{\prime}(0)<0$.
Now we prove $(iv)$. To this end, we apply the implicit function theorem on the $C^1$ function $\Psi(t, u)=\Phi_u^{\prime}(t)$.
Note that $\Psi(t_u, u)=0$, $\partial_t \Psi(t_u, u)<0$, and it is not possible to pass with continuity from
$\mathcal{P}_+$ to $\mathcal{P}_-$. Thus, we obtain that $u\mapsto t_u$ is $C^1$. The same argument indicates that $u\mapsto \xi_u$ is also $C^1$. The proof is finished.
\end{proof}
Now we denote that
for $k>0$,
$$
A_k:=\{u\in S:|(-\Delta)^{\frac{s_1}{2}}u|_ 2 <k\} \ \text{and}\ \gamma_{\mu}(a):=\inf_{u\in A_{R_0}}E(u).
$$
Then based on the Lemma \ref{lem:t-u-P-Property}, we can directly infer the following result.
\begin{corollary}\label{cor:supE-xiaoyu-0-infE}
The set $\mathcal{P}_+$ is contained in $A_{R_0}=\{u\in S:|(-\Delta)^{\frac{s_1}{2}}u|_ 2 <R_0 \}$ and $\sup_{\mathcal{P}_+}E\leq0\leq \inf_{\mathcal{P}_-}E$.
\end{corollary}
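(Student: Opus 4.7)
The plan is to read off both claims as immediate consequences of Lemma~\ref{lem:t-u-P-Property}. The key observation is that if $u \in \mathcal{P}_{a,\mu}$, then trivially $0 \star u = u \in \mathcal{P}_{a,\mu}$, so by Proposition~\ref{pro:t-p-uniquess} the value $t = 0$ must be a critical point of the fiber map $\Phi_u$. Since Lemma~\ref{lem:t-u-P-Property}(i) asserts that $\Phi_u$ has exactly two critical points $\xi_u < t_u$ with $\xi_u \star u \in \mathcal{P}_+$ and $t_u \star u \in \mathcal{P}_-$, and since Lemma~\ref{lem:P-natural-subcritical} gives $\mathcal{P}_0 = \emptyset$ so that $\mathcal{P}_{a,\mu} = \mathcal{P}_+ \cup \mathcal{P}_-$, I obtain the dichotomy: $\xi_u = 0$ whenever $u \in \mathcal{P}_+$, and $t_u = 0$ whenever $u \in \mathcal{P}_-$.

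With this dichotomy in hand I would conclude as follows. For the containment $\mathcal{P}_+ \subset A_{R_0}$, fix $u \in \mathcal{P}_+$; then $\xi_u = 0$ is the local minimum of $\Phi_u$ identified in Lemma~\ref{lem:t-u-P-Property}, which lies in the open interval $(-\infty, \log(R_0 / |(-\Delta)^{\frac{s_1}{2}} u|_2))$ recorded in the proof of that lemma. The strict inequality $0 < \log(R_0 / |(-\Delta)^{\frac{s_1}{2}} u|_2)$ is exactly $|(-\Delta)^{\frac{s_1}{2}} u|_2 < R_0$, so $u \in A_{R_0}$.

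For the energy bounds, if $u \in \mathcal{P}_+$ then $\xi_u = 0$ together with Lemma~\ref{lem:t-u-P-Property}(ii) yields $E(u) = \Phi_u(0) = \Phi_u(\xi_u) < 0$, hence $\sup_{\mathcal{P}_+} E \leq 0$; if $u \in \mathcal{P}_-$ then $t_u = 0$ and Lemma~\ref{lem:t-u-P-Property}(iii) give $E(u) = \Phi_u(t_u) > 0$, hence $\inf_{\mathcal{P}_-} E \geq 0$. There is essentially no obstacle here: the corollary is a direct reading of Lemma~\ref{lem:t-u-P-Property} once $\mathcal{P}_0 = \emptyset$ is known, and the only point worth a second glance is obtaining the strict bound in the definition of $A_{R_0}$ rather than a closed one, which is handled by the fact that $\xi_u$ was characterized as a local minimum in an \emph{open} interval strictly bounded above by $\log(R_0 / |(-\Delta)^{\frac{s_1}{2}} u|_2)$.
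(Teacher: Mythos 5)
Your proposal is correct and is exactly the "direct inference" from Lemma \ref{lem:t-u-P-Property} that the paper intends (the paper gives no separate proof, stating only that the corollary follows from that lemma). The dichotomy $\xi_u=0$ on $\mathcal{P}_+$ and $t_u=0$ on $\mathcal{P}_-$ via Proposition \ref{pro:t-p-uniquess}, the strict containment from $\xi_u$ lying in the open interval $(-\infty,\log(R_0/|(-\Delta)^{\frac{s_1}{2}}u|_2))$, and the sign conditions from items $(ii)$--$(iii)$ are precisely the intended reading.
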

\begin{lemma}\label{lem:gamma-xiaoyu-0}
 It results that $\gamma_{\mu}(a)\in(-\infty, 0)$. Moreover,
 \begin{equation}\label{eqn:gamma-a-properties}
 \gamma_{\mu}(a)=\inf_{\mathcal{P}}E=\inf_{\mathcal{P}_+}E
 \textit{ and } \gamma_{\mu}(a)<\inf_{\overline{A_{R_0}}\setminus A_{R_0-\varrho}}E
\end{equation}
 for $\varrho >0$ small enough.
\end{lemma}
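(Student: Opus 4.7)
The plan is to combine three ingredients already in place: the pointwise bound $E(u)\ge h(|(-\Delta)^{s_1/2}u|_2)$ from \eqref{eqn:E-mu-G-N-1}, the structure of the fiber map $\Phi_u$ from Lemma~\ref{lem:t-u-P-Property}, and the energy separation $\sup_{\mathcal{P}_+}E\le 0\le\inf_{\mathcal{P}_-}E$ from Corollary~\ref{cor:supE-xiaoyu-0-infE}. Each of the three claims in the statement will follow by a short argument once these are invoked.

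First I would prove $\gamma_\mu(a)\in(-\infty,0)$. The lower bound is immediate: on $A_{R_0}$ the quantity $|(-\Delta)^{s_1/2}u|_2$ ranges in the compact interval $[0,R_0]$, so \eqref{eqn:E-mu-G-N-1} together with continuity of $h$ gives $E(u)\ge\min_{[0,R_0]}h>-\infty$. For the upper bound I would fix any $u\in S_a$ and analyze $\Phi_u(t)$ as $t\to-\infty$. The exponent hypothesis $2<q<2+\frac{4s_2}{N}<2+\frac{4s_1}{N}<p$ yields the strict ordering $q\kappa_q<2s_2<2s_1<p\kappa_p$, so after factoring $e^{q\kappa_q t}$ in \eqref{E-fiber-maps} the bracketed expression tends to $-\frac{\mu}{q}|u|_q^q<0$. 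Consequently $\Phi_u(t)<0$ for $t\ll 0$; since also $|(-\Delta)^{s_1/2}(t\star u)|_2=e^{s_1 t}|(-\Delta)^{s_1/2}u|_2\to 0$, we have $t\star u\in A_{R_0}$, hence $\gamma_\mu(a)\le E(t\star u)<0$.

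For the equality $\gamma_\mu(a)=\inf_{\mathcal{P}_+}E=\inf_{\mathcal{P}}E$, Corollary~\ref{cor:supE-xiaoyu-0-infE} gives $\mathcal{P}_+\subset A_{R_0}$, whence $\gamma_\mu(a)\le\inf_{\mathcal{P}_+}E$. Conversely, every $u\in A_{R_0}$ satisfies $|(-\Delta)^{s_1/2}(0\star u)|_2<R_0$, so Lemma~\ref{lem:t-u-P-Property}(ii) supplies $\xi_u\star u\in\mathcal{P}_+$ with $E(\xi_u\star u)\le E(u)$; passing to the infimum over $u\in A_{R_0}$ gives $\inf_{\mathcal{P}_+}E\le\gamma_\mu(a)$. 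Finally, Corollary~\ref{cor:supE-xiaoyu-0-infE} yields $\inf_{\mathcal{P}_-}E\ge 0>\gamma_\mu(a)=\inf_{\mathcal{P}_+}E$, so $\inf_{\mathcal{P}}E=\inf_{\mathcal{P}_+}E=\gamma_\mu(a)$.

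For the strict annular inequality, on the shell $\overline{A_{R_0}}\setminus A_{R_0-\varrho}$ we have $|(-\Delta)^{s_1/2}u|_2\in[R_0-\varrho,R_0]$, so \eqref{eqn:E-mu-G-N-1} gives $E(u)\ge\min_{[R_0-\varrho,R_0]}h$. Since $h(R_0)=0$ and $h$ is continuous, $\min_{[R_0-\varrho,R_0]}h\to 0$ as $\varrho\to 0^+$, while $\gamma_\mu(a)<0$ is strict; selecting $\varrho$ small enough separates the two quantities. I do not expect a real obstacle here: the only delicate point is the asymptotic analysis of the fiber map near $-\infty$, which relies on the strict ordering $q\kappa_q<2s_2$ (i.e., on $q<2+\tfrac{4s_2}{N}$, not merely $q<\bar p$), so that the $|u|_q^q$ term genuinely dominates the positive kinetic contributions in the bracket.
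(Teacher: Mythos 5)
Your proposal is correct and follows essentially the same route as the paper: the lower bound via $E(u)\ge h(|(-\Delta)^{s_1/2}u|_2)\ge\min_{[0,R_0]}h$, negativity via the fiber map as $t\to-\infty$ (the paper records $\Phi_u(-\infty)=0^-$ in Lemma \ref{lem:t-u-P-Property}, and your factoring of $e^{q\kappa_q t}$ using $q\kappa_q<2s_2$ is exactly the detail behind that claim), the two-sided comparison with $\inf_{\mathcal{P}_+}E$ via Corollary \ref{cor:supE-xiaoyu-0-infE} and Lemma \ref{lem:t-u-P-Property}(ii), and the annular estimate via continuity of $h$ at $R_0$ where $h(R_0)=0$. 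No gaps.
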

\begin{proof}
For $u\in A_{R_0}$, we know that
$$
E(u)\geq h(|(-\Delta)^{\frac{s_1}{2}}u|_ 2)\geq\min_{t\in[0,R_0]}h(t)>-\infty,
$$
which implies that $\gamma_{\mu}(a)>-\infty$. Moreover, for any $u\in S_a$, we have $|(-\Delta)^{\frac{s_1}{2}}(t\star u)|_ 2< R_0 $ and  $E(t\star u)<0$ for $t<<-1$. Hence, $\gamma_{\mu}(a)\in (-\infty,0)$.

Now, we prove \eqref{eqn:gamma-a-properties}.
By Corollary \ref{cor:supE-xiaoyu-0-infE}, we have $\mathcal{P}_{+}\subset A_{R_0}$, then it is obvious that $\gamma_{\mu}(a)\leq\inf_{\mathcal{P}_+}E$.
 On the other hand, in view of Lemma \ref{lem:t-u-P-Property}, we deduce that if $u\in A_{R_0}$, then $\xi_u\star u \in\mathcal{P}_+\subset A_{R_0}$ and
$$
E(\xi_u\star u)=\min\left\{E(t\star u):t\in\mathbb{R}\mathrm{~and~}|(-\Delta)^{\frac{s_1}{2}}u|_ 2 <R_0\right\}\leq E(u),
$$
which indicates that $\inf_{\mathcal{P}_+}E\leq \gamma_{\mu}(a)<0$.
Thus, $\inf_{\mathcal{P}_+}E= \gamma_{\mu}(a)<0$. Using Corollary \ref{cor:supE-xiaoyu-0-infE} again, we can conclude that
$\inf_{\mathcal{P}_+}E=\inf_{\mathcal{P}}E$.
Finally, by continuity of $h$ there exists $\varrho>0$ such that $h(t)\geq \frac{\gamma_{\mu}(a)}{2}$ if $t\in [ R_0-\varrho,\;R_0 ]$. Therefore, we have
$$
E(u)\geq h(|(-\Delta)^{\frac{s_1}{2}}u|_ 2)\geq\frac{\gamma_{\mu}(a)}{2}>\gamma_{\mu}(a)
$$
for every $u\in S_a$ with $ R_0 -\varrho\leq |(-\Delta)^{\frac{s_1}{2}}u|_ 2 \leq R_0$. This completes the proof.
\end{proof}
\noindent\textbf{Proof of Theorem \ref{Thm:q-p-mass-subcritical-supcritical} (existence of a local minimizer).}
Let $\{v_n\}$ be a minimizing sequence of $\gamma_\mu(a)$ and $|v_n |^{\ast}$ be the symmetric decreasing rearrangement of $v_n$, then it is well-known that $\{|v_n |^{\ast}\}\subset A_{R_0}$ and $E(|v_n |^{\ast})\leq E(v_n)$. Thus, we can assume that
 $v_n \in S_{a,r} $ is radially decreasing for every $n$.
 By Lemma \ref{lem:t-u-P-Property} and Corollary \ref{cor:supE-xiaoyu-0-infE}, there exists $\xi_{v_n}\in\mathbb R$ such that
$\xi_{v_n}\star v_{n}\in\mathcal{P}_+ $, $|(-\Delta)^{\frac{s_1}{2}}(\xi_{v_n}\star v_{n})|< R_0$ and
$$
E\left(\xi_{v_n}\star v_n\right)=\min\left\{E(t\star v_n):t\in\mathbb{R}\mathrm{~and~}|(-\Delta)^{\frac{s_1}{2}}(t\star v_{n})|< R_0\right\}\leq E(v_n).
$$
In this way we obtain a new minimizing sequence $\{w_n:=\xi_{v_n}\star v_{n}\}$ with $w_n\in S_{a,r} \cap \mathcal{P}_{+}$.  From Lemma \ref{lem:gamma-xiaoyu-0}, we know that $|(-\Delta)^{\frac{s_1}{2}}w_n |_ 2 <R_0-\varrho$ for every $n$, then the Ekeland's
variational principle yields that there exists a new minimizing sequence $\{u_n\}\subset A_{R_0}$ for $\gamma_{\mu}(a)$ with the property that $\|u_n -w_n\|_{H^{s_1}(\mathbb{R}^{N})}\to 0$ as $n\to\infty$, which  is also a Palais-Smale sequence for $E(u)$ on $S_a$.
Moreover,
the property $\|u_n -w_n\|_{H^{s_1}(\mathbb{R}^{N})}\to 0$ and  the boundedness of $\{w_n\}$ imply $P(u_n)\to 0$.
 Thus, applying Lemma \ref{lem:mass-supercritical-setting-PS-compact}, we can conculde that, up to a subsequence, $u_n \to \tilde{u}$ in $H^{s_1}(\mathbb{R}^{N})$, $\tilde{u}$ is an interior local minimizer for $E |_{A_{R_0}}$, and solves \eqref{eqn:Mixed-fractioanl-CN} for some $\tilde{\lambda}<0$.
 Moreover, it follows from Lemmas \ref{lem:P-natural-subcritical} and \ref{lem:gamma-xiaoyu-0} that $\tilde u$ is a ground
state for \eqref{eqn:Mixed-fractioanl-CN}, which is  nonnegative and radially decreasing.

\subsection{Existence of a second critical point of mountain pass type for $E |_{S_a}$}

\begin{lemma}\label{lem:Eu-tu-negative-1}
Suppose that $E(u)<\gamma_{\mu}(a)$, then the value $t_u$ defined by Lemma {\rm\ref{lem:t-u-P-Property}} is negative.
\end{lemma}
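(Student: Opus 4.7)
The plan is to argue by contradiction, assuming $t_u\geq 0$, and use the monotonicity/shape information about the fiber map $\Phi_u$ established in Lemma \ref{lem:t-u-P-Property} together with the variational characterization $\gamma_\mu(a)=\inf_{\mathcal{P}_+}E$ from Lemma \ref{lem:gamma-xiaoyu-0}.

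First I would recall the global picture of $\Phi_u$ on $\mathbb{R}$: since $\Phi_u(-\infty)=0^-$, $\Phi_u(+\infty)=-\infty$, and the only critical points are the local minimum $\xi_u$ (at a negative value) and the global maximum $t_u$ (at a positive value) with $\xi_u<t_u$, the function $\Phi_u$ is strictly decreasing on $(-\infty,\xi_u)$, strictly increasing on $(\xi_u,t_u)$, and strictly decreasing on $(t_u,+\infty)$. In particular, $\Phi_u(\xi_u)$ is the global minimum on $(-\infty,t_u]$.

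Now suppose for contradiction that $t_u\geq 0$. Then $0\in(-\infty,t_u]$, and by the monotonicity just described we have
\begin{equation*}
\Phi_u(0)\geq \Phi_u(\xi_u),
\end{equation*}
regardless of whether $0\leq \xi_u$ (then use decrease on $(-\infty,\xi_u]$) or $\xi_u<0\leq t_u$ (then use that $\xi_u$ is a min on the interval containing $0$). Since $\Phi_u(0)=E(u)$ and $\xi_u\star u\in\mathcal{P}_+$ by Lemma \ref{lem:t-u-P-Property}$(i)$, this gives
\begin{equation*}
E(u)=\Phi_u(0)\geq \Phi_u(\xi_u)=E(\xi_u\star u)\geq \inf_{\mathcal{P}_+}E=\gamma_\mu(a),
\end{equation*}
where the last equality is Lemma \ref{lem:gamma-xiaoyu-0}. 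This contradicts the hypothesis $E(u)<\gamma_\mu(a)$, so we must have $t_u<0$.

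The argument is short and involves no real obstacle beyond invoking the correct facts: the only point that requires a little care is recording that $\Phi_u(\xi_u)$ is the minimum on the whole half-line $(-\infty,t_u]$ (not merely a local minimum), which is immediate from the three-interval monotonicity structure of $\Phi_u$. Everything else is a direct application of Lemmas \ref{lem:t-u-P-Property} and \ref{lem:gamma-xiaoyu-0}.
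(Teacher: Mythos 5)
Your proof is correct and follows essentially the same contradiction argument as the paper: both reduce to the chain $\gamma_\mu(a)>E(u)=\Phi_u(0)\geq E(\xi_u\star u)\geq\gamma_\mu(a)$. The only (cosmetic) difference is that the paper rules out $t_u\geq 0$ by way of the zeros $c_u<d_u$ of $\Phi_u$ and the characterization of $E(\xi_u\star u)$ as the minimum over $A_{R_0}$, whereas you argue directly from the three-interval monotonicity of $\Phi_u$ and the identity $\gamma_\mu(a)=\inf_{\mathcal{P}_+}E$; both versions rest on Lemmas \ref{lem:t-u-P-Property} and \ref{lem:gamma-xiaoyu-0}.
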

\begin{proof}
In view of Lemma \ref{lem:t-u-P-Property}, we know that $\xi_u <c_u < t_u<d_u $. Therefore, if $d_u\leq 0$, we have $t_u<0$. Assume by contradiction that $d_u >0$. Then we can infer that
$c_u >0$. Indeed, if $0\in (c_u, d_u)$, then $E(u)=\Phi_u (0)>0$, which contradicts $E(u)<\gamma_{\mu}(a)<0$. Thus, from Lemma \ref{lem:t-u-P-Property}-$(ii)$, we deduce that
\begin{equation*}
\begin{aligned}
\gamma_{\mu}(a)&>E(u)=\Phi_u(0)\geq\inf_{t\in(-\infty,c_u]}\Phi_u(t)\\
&\geq\inf\{E(t\star u):t\in\mathbb{R}\mathrm{~and~}|(-\Delta)^{\frac{s_1}{2}}(t\star v_{n})|_2<R_0\}=E(\xi_u\star u)\geq \gamma_{\mu}(a),
\end{aligned}
\end{equation*}
which is again a contradiction.
\end{proof}

\begin{lemma}\label{lem:sigama-dayu-0-1}
There holds $\inf_{u\in\mathcal{P}_-}E(u)>0.$
\end{lemma}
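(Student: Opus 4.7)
The plan is to exploit the identification, provided by Lemma \ref{lem:t-u-P-Property}, of elements of $\mathcal P_-$ with the global maxima of the fiber maps $\Phi_u$, and then to combine this with the pointwise lower bound $\Phi_u(t) \geq h(e^{s_1 t}|(-\Delta)^{s_1/2}u|_2)$ coming from \eqref{eqn:E-mu-G-N-1}.

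More precisely, fix $u\in\mathcal P_-$. Since $u\in\mathcal P$, Proposition \ref{pro:t-p-uniquess} says that $0$ is a critical point of $\Phi_u$. By Lemma \ref{lem:t-u-P-Property}, $\Phi_u$ has exactly two critical points $\xi_u<t_u$, the first one being a local minimum sitting in $\mathcal P_+$ and the second one a global maximum sitting in $\mathcal P_-$. Since $\mathcal P_+\cap\mathcal P_-=\emptyset$ by Lemma \ref{lem:P-natural-subcritical}, we must have $0=t_u$, hence
\[
E(u)=\Phi_u(0)=\max_{t\in\mathbb R}\Phi_u(t)=\max_{t\in\mathbb R}E(t\star u).
\]

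Next, applying the bound \eqref{eqn:E-mu-G-N-1} to $v=t\star u\in S_a$ (which has $|(-\Delta)^{s_1/2}(t\star u)|_2=e^{s_1 t}|(-\Delta)^{s_1/2}u|_2$) yields
\[
E(t\star u)\;\geq\;h\bigl(e^{s_1 t}|(-\Delta)^{s_1/2}u|_2\bigr).
\]
As $t$ ranges over $\mathbb R$, the argument $e^{s_1 t}|(-\Delta)^{s_1/2}u|_2$ sweeps out the whole of $(0,\infty)$, so taking the maximum in $t$ on both sides gives
\[
E(u)\;\geq\;\max_{r>0}h(r).
\]
By Lemma \ref{lem:h-dayu-0-1}, under assumption $(A_1)$ the function $h$ admits a global strict maximum at a positive level, so $M:=\max_{r>0}h(r)>0$; moreover $M$ depends only on $a,\mu,p,q,s_1,N$ and not on $u$. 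Taking the infimum over $u\in\mathcal P_-$ therefore yields $\inf_{\mathcal P_-}E\geq M>0$.

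There is essentially no technical obstacle here: the whole argument is a clean consequence of the structural lemmas already established (Lemma \ref{lem:h-dayu-0-1}, Lemma \ref{lem:P-natural-subcritical}, and Lemma \ref{lem:t-u-P-Property}). The only point that deserves a line of care is the identification $t_u=0$ for $u\in\mathcal P_-$, which requires both that $0$ is a critical point of $\Phi_u$ and that it cannot be the local minimum $\xi_u$ (ruled out by $\mathcal P_+\cap\mathcal P_-=\emptyset$, i.e.\ by $(A_2)$ through $\mathcal P_0=\emptyset$).
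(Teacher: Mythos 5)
Your proof is correct and follows essentially the same route as the paper: both arguments identify $0$ as the global maximum point of $\Phi_u$ for $u\in\mathcal P_-$ and then bound $E(u)=\max_t E(t\star u)$ from below by $\max_{r>0}h(r)=h(t_{\max})>0$ via \eqref{eqn:E-mu-G-N-1} and Lemma \ref{lem:h-dayu-0-1}. The only cosmetic difference is that the paper evaluates at the specific dilation $\tau_u$ with $|(-\Delta)^{s_1/2}(\tau_u\star u)|_2=t_{\max}$ rather than taking the supremum over all $t$, which amounts to the same thing.
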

\begin{proof}
Let $t_{\max}>0$ be the strict maximum of the function $h$ at positive level, see Lemma \ref{lem:h-dayu-0-1} for the properties of $h$. Note that $|(-\Delta)^{\frac{s_1}{2}}(t\star u)|_2=e^{s_1t}|(-\Delta)^{\frac{s_1}{2}}u|_2$ for $t\in\mathbb R$, then
for every $u\in \mathcal{P_-}$, there exists $\tau_u\in\mathbb{R}$ such that $|(-\Delta)^{\frac{s_1}{2}}(\tau_u\star u)|_2=t_{\max}$. On the other hand, for any $u\in \mathcal{P}_-$, it follows from
Lemma \ref{lem:t-u-P-Property} that 0 is the unique strict maximum of the function $\Phi_{u}$. Therefore, from \eqref{eqn:E-mu-G-N-1}, we conclude that
$$
E(u)=\Phi_u(0)\geq\Phi_u(\tau_u)=E(\tau_u\star u)\geq h(|(-\Delta)^{\frac{s_1}{2}}(\tau_u\star u)|_2)=h(t_{\max})>0,
$$
which states that $\inf_{\mathcal{P}_-}E\geq h(t_{max})>0$ and the proof is finished.
\end{proof}

In order to obtain the second critical point of $E|_{S_a}$, it is necessary to denote the tangent space to $S_a$ in $u$ as
$$
T_{u}S_a:=\{v\in H^{s_1}(\mathbb R^N):\int_{\mathbb R^N}uvdx=0\}.
$$
Then the following result holds and its proof is standard, see \cite[Lemma 3.6]{Bartsch-Soave-CV-2019}.
\begin{lemma}\label{lem:Tu-Sa-isomorphism-1}
For $u\in S_a$ and $t\in \mathbb{R}$, the map
$$T_{u}S_a\to T_{t\star u}S_a,\quad\varphi\mapsto t\star\varphi $$
is a linear isomorphism with inverse $\psi\mapsto(-t)\star\psi.$
\end{lemma}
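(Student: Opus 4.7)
The plan is to unpack the definition of the dilation and check three simple properties: linearity, that the map sends $T_u S_a$ into $T_{t\star u}S_a$, and that composition with $(-t)\star$ yields the identity. All three reduce to one-line computations from the pointwise formula $(t\star v)(x)=e^{Nt/2}v(e^tx)$ in \eqref{eqn:u-e-t}, together with the fact that $\star$ preserves the $L^2$-norm.

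First, I would observe that $\varphi\mapsto t\star\varphi$ is a well-defined linear endomorphism of $H^{s_1}(\mathbb{R}^N)$: linearity is immediate from the pointwise formula, and the fact that $t\star\varphi\in H^{s_1}(\mathbb{R}^N)$ whenever $\varphi\in H^{s_1}(\mathbb{R}^N)$ follows from the usual scaling identity $|(-\Delta)^{s_1/2}(t\star\varphi)|_2=e^{s_1 t}|(-\Delta)^{s_1/2}\varphi|_2$ (and $|t\star\varphi|_2=|\varphi|_2$). Next, to see that the image lies in $T_{t\star u}S_a$, I would compute, via the change of variables $y=e^tx$,
$$
\int_{\mathbb{R}^N}(t\star u)(x)\,(t\star\varphi)(x)\,dx
=e^{Nt}\int_{\mathbb{R}^N}u(e^tx)\varphi(e^tx)\,dx
=\int_{\mathbb{R}^N}u(y)\varphi(y)\,dy=0,
$$
where the last equality is the assumption $\varphi\in T_uS_a$.

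Finally, I would verify that $\star$ is a group action of $(\mathbb{R},+)$ on $H^{s_1}(\mathbb{R}^N)$: directly from the definition,
$$
\bigl((-t)\star(t\star\varphi)\bigr)(x)=e^{-Nt/2}(t\star\varphi)(e^{-t}x)=e^{-Nt/2}e^{Nt/2}\varphi(e^te^{-t}x)=\varphi(x),
$$
and symmetrically $t\star((-t)\star\psi)=\psi$. This shows $\psi\mapsto(-t)\star\psi$ is the two-sided inverse of $\varphi\mapsto t\star\varphi$, so the map is a linear isomorphism as claimed.

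There is no genuine obstacle here; the only point that requires any care is keeping track of the Jacobian $e^{Nt}$ in the change of variables so that it cancels the two factors of $e^{Nt/2}$ coming from the dilation, which is precisely why $\star$ preserves $L^2$ pairings and hence sends tangent spaces to tangent spaces.
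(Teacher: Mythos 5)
Your proof is correct and is precisely the standard argument that the paper defers to by citing \cite[Lemma 3.6]{Bartsch-Soave-CV-2019}: the change of variables showing $\star$ preserves the $L^2$ pairing (hence maps tangent spaces to tangent spaces) plus the group-action identity $(-t)\star(t\star\varphi)=\varphi$ giving the two-sided inverse. Nothing is missing.
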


\noindent\textbf{Proof of Theorem \ref{Thm:q-p-mass-subcritical-supcritical} (existence of a second critical point for $E|_{S_a}$).}
Inspired by \cite{JEAN-NA-1997}, we
consider the augmented functional $\tilde{E}:\mathbb{R}\times H^{s_1}(\mathbb{R}^N) \to \mathbb{R}$ defined by
\begin{equation}\label{eqn:E-tilde-u-definition}\small
\begin{aligned}
\tilde{E}(t,u)&:=E(t\star u)\\
&=\frac{e^{2s_1 t}}2\int_{\mathbb{R}^N}|(-\Delta)^{\frac{s_1}{2}} u|^2dx+\frac{e^{2s_2 t}}2\int_{\mathbb{R}^N}|(-\Delta)^{\frac{s_2}{2}} u|^2dx\\
&\ \ -\mu\frac{e^{\kappa_{q}qt}}{q}\int_{\mathbb{R}^N}|u|^qdx
-\frac{e^{\kappa_{p}pt}}{p}\int_{\mathbb{R}^N}|u|^pdx
\end{aligned}
\end{equation}
and look at the restriction $\tilde{E}|_{\mathbb{R}\times S_{a,r}}$. Obviously, $\tilde{E}$ is of class $C^1$ and a Palais-Smale sequence for $\tilde{E}|_{\mathbb{R}\times S_{a,r}}$ is also a Palais-Smale sequence for $\tilde{E}|_{\mathbb{R}\times S_a}$.

 Denoting by $E^c$ the closed sublevel set $\{u\in S_{a}:E(u)\leq c\}$, we introduce the minimax class
 \begin{equation}\label{eqn:hua-F-definition-1}
 \mathcal{F}:=\left\{f=(\alpha,\beta)\in C([0,1], \mathbb{R}\times S_{a,r}):f(0)\in(0,\mathcal{P}_+),f(1)\in(0,E^{2\gamma_{\mu}(a)})
 \right\}
 \end{equation}
 with associated minimax level
 $$
 \sigma_{\mu}(a)=\inf_{f\in\mathcal{F}}\max_{(t,u)\in f([0,1])}\tilde{E}(t,u).
 $$
Let $u\in S_{a,r}$, then it follows from \eqref{eqn:s-u-continous-Hs1} and Lemma \ref{lem:t-u-P-Property} that there exists $\xi_0 >>1$ such that
\begin{equation}\label{eqn:f-u-tau-1}
f_u:\tau\in[0,1]\mapsto(0,((1-\tau)\xi_u+\tau \xi_0)\star u)\in\mathbb{R}\times S_{a,r}
\end{equation}
 is a path in $\mathcal{F}$, which means that $\mathcal{F}\not=\emptyset$.
Now we claim that
\begin{equation}\label{eqn:f-Hua-shuyu-P-1}
 \text{for every}\; f\in\mathcal{F},\ \text{there exists }\;\tau_f\in(0,1)\text{ such that }\alpha(\tau_f)\star\beta(\tau_f)\in\mathcal{P}_-.
\end{equation}
Indeed, since $f(0)=(\alpha(0),\beta(0))\in(0,\mathcal{P}_+)$, we can infer from Proposition \ref{pro:t-p-uniquess} and Lemma \ref{lem:t-u-P-Property} that
$$
t_{\alpha(0)\star\beta(0)}=t_{\beta(0)}>\xi_{\beta(0)}=0.
$$
On the other hand, since $E(\beta(1))=\tilde{E}(f(1))\leq2\gamma_{\mu}(a)<\gamma_{\mu}(a)$, then it follows from Lemma \ref{lem:Eu-tu-negative-1} that
 $$
 t_{\alpha(1)\star\beta(1)}=t_{\beta(1)}<0.
 $$
 Noting that $t_{\alpha(\tau)\star\beta(\tau)}$  is continuous in $\tau$, we derive that there exists $\tau_{f}\in(0,1)$ such that $t_{\alpha(\tau_f)\star\beta(\tau_f)}=0$, that is, the claim \eqref{eqn:f-Hua-shuyu-P-1} is true.

In light of the claim above, we can get that
\begin{equation}\label{eqn:sigma-a=P-}
 \sigma_{\mu}(a)=\inf_{\mathcal{P}_- \cap S_{a,r}} E.
\end{equation}
In fact, by \eqref{eqn:f-Hua-shuyu-P-1}, we have
 $$
 \max_{f([0,1])}\tilde{E}\geq\tilde{E}(f(\tau_{f}))=E(\alpha(\tau_{f})\star\beta(\tau_{f}))\geq\inf_{\mathcal{P}_{-}\cap S_{r}}E,
 $$
and then $\sigma_{\mu}(a)\geq\inf_{\mathcal{P}_- \cap S_{a,r}} E$.
For the reverse inequality, it suffices to consider
 $u\in\mathcal{P}_- \cap S_{a,r}$ and the path $f_u\in\mathcal{F}$ defined by
\eqref{eqn:f-u-tau-1}, then we can determine that
$$
E\left(u\right)=\tilde{E}\left(0,u\right)=\max_{f_u\left([0,1]\right)}\tilde{E}\geq\sigma_{\mu}(a),
$$
which yields that $\inf_{\mathcal{P}_-\cap S_{a,r}}E\geq\sigma_{\mu}(a)$.

Combining \eqref{eqn:sigma-a=P-}, Corollary \ref{cor:supE-xiaoyu-0-infE} and Lemma \ref{lem:sigama-dayu-0-1}, we deduce that
 \begin{equation}\label{eqn:sigma-mu-a-1}
 \sigma_{\mu}(a)=\inf_{\mathcal{P}_-\cap S_{a,r}}E>0\geq\sup_{(\mathcal{P}_+\cup E^{2\gamma_{\mu}(a)})\cap S_{a,r}}E=\sup_{((0,\mathcal{P}_+)\cup(0,E^{2\gamma_{\mu}(a)}))\cap S_{a,r}}\tilde{E}.
 \end{equation}
Using the terminology in \cite[Section 5]{Ghoussoub-1993},
this means that $\{f\left([0,1]\right):f\in\mathcal{F}\}$ is a homotopy stable
family of compact subsets of $\mathbb{R}\times S_{a,r} $ with extended closed boundary $(0,\mathcal{P}_+)\cup(0,E^0)$, and the superlevel set $\{\tilde{E}\geq\sigma_{\mu}(a)\}$ is a dual set, in the sense that assumptions $(F'1)$ and $(F'2)$
in \cite[Theorem 5.2]{Ghoussoub-1993} are satisfied. Let
$\{f_n=(\alpha_n,\beta_n)\}\subset\mathcal{F} $ be the
 any minimizing sequence for $\sigma_{\mu}(a)$ with the property that $\alpha_n\equiv0$ and $\beta_n(\tau)\geq0$ a.e. in $ \mathbb{R}^N$  for every $\tau\in [0,1]$, then \cite[Theorem 5.2]{Ghoussoub-1993} implies that
  there exists a Palais-Smale sequence $\{(t_n,w_n)\}\subset\mathbb{R}\times S_{a,r}$ for $\tilde{E}|_{\mathbb{R}\times S_{a,r}}$ at level
$\sigma_{\mu}(a)$, that is
\begin{equation}\label{eqn:partial-E-t-1}
\partial_t\tilde{E}(t_n,w_n)\to0\text{ and } \|\partial_u\tilde{E}(t_n,w_n)\|_{(T_{w_n}S_r)^*}\to0
\text{ as }n\to\infty,
\end{equation}
 with the additional property that
\begin{equation}\label{eqn:tn-wn-beta-1}
|t_n|+\mathrm{dist}_{H^{s_1}(\mathbb{R}^N)}(w_n,\beta_n([0,1]))
\to0\text{ as }n\to\infty.
\end{equation}
Due to the definition of $\tilde E$ (see \eqref{eqn:E-tilde-u-definition}) and \eqref{eqn:partial-E-t-1}, we have $P(t_n\star w_n)\to 0$ and
\begin{equation*}
\begin{aligned}
&e^{2s_1 t_n}\int_{\mathbb{R}^N}(-\Delta)^{\frac{s_1}{2}} w_n (-\Delta)^{\frac{s_1}{2}}\varphi dx +e^{2s_2 t_n}\int_{\mathbb{R}^N}(-\Delta)^{\frac{s_2}{2}} w_n  (-\Delta)^{\frac{s_2}{2}}\varphi dx\\
&\ \ -\mu e^{\kappa_{q}qt}\int_{\mathbb{R}^N}|w_n |^{q-2}w_n \varphi dx-e^{\kappa_{p}pt}\int_{\mathbb{R}^N}|w_n |^{p-2}w_n \varphi dx=o_{n}(1)\|\varphi\|_{H^{s_1}(\mathbb{R}^N)}
\end{aligned}
\end{equation*}
for every $\varphi\in T_{w_n}S_{a,r}$. Moreover, by \eqref{eqn:tn-wn-beta-1}, we know that $\{t_n\}$  is uniformly bounded and then
\begin{equation}\label{eqn:dE-tn-wn-1}
dE(t_n\star w_n)[t_n\star\varphi]=o_n(1)\|\varphi\|_{H^{s_1}(\mathbb{R}^{N})}
=o_{n}(1)\|t_n\star\varphi\|_{H^{s_1}(\mathbb{R}^{N})}.
\end{equation}
Let $u_n :=t_n \star w_n$, then Lemma \ref{lem:Tu-Sa-isomorphism-1} and \eqref{eqn:dE-tn-wn-1} signify that $\{u_n\}\subset S_{a,r}$  is a Palais-Smale sequence for $E|_{S_a}$ at level $\sigma_{\mu}(a)>0$, with $P(u_n)\to 0$. By Lemma \ref{lem:mass-supercritical-setting-PS-compact} and \eqref{eqn:tn-wn-beta-1}, we conclude that, up to a subsequence, $u_n \to\hat{u}$ in $H^{s_1}(\mathbb{R}^N)$, where $\hat{u}\in S_a$ is a nonnegative radial solution of \eqref{eqn:Mixed-fractioanl-CN} for some $\hat{\lambda}<0$.

\section{Sobolev critical leading term with a $L^2$-subcritical perturbation}\label{sec:sobolev-critical}
In this section, we consider $2<q< 2+\frac{4s_2}{N}<p=2_{s_1}^*$
and always assume that $N\geq2$ and $0<s_2<s_1<1$.
Moreover, we establish the existence of a local minimizer for the
constrained functional and determine this minimizer is a ground state of problem \eqref{eqn:Mixed-fractioanl-CN}, that  is, Theorem \ref{Thm:q-mass-subcritical-p-sobolev-critical} is obtained.

Recall the fractional Sobolev inequality \eqref{eqn:sobolev-critical-inequality}, i.e.,
\begin{equation*}
\mathcal{S} |u|_{2_{s_1}^{*}}^2\leq |(-\Delta)^{\frac{s_1}{2}}u|_2 ^2,\quad\forall u\in H^{s_1}(\mathbb{R}^N).
\end{equation*}
Then, similar to \eqref{eqn:E-mu-G-N-1}, we have
\begin{equation}\label{eqn:E-mu-dayu-h}
E(u)\geq |(-\Delta)^{\frac{s_1}{2}}u|_2 ^2 h(a,|(-\Delta)^{\frac{s_1}{2}}u|_2)\ \mbox{for every}\ u\in S_a,
\end{equation}
where
 $$
h(a,\rho):=
\frac{1}{2} -\frac{1}{2_{s_1}^{*}}\frac{1}{{{\mathcal{S} }^{\frac{2_{s_1}^{*}}{2}}}}\rho^{\frac{4s_1 }{N-2s_1}}-\frac{\mu C_{N,s_1,q}}{q}a^{\frac{2N-q(N-2s_1)}{2 s_1}}\rho^{\frac{N(q-2)-4s_1 }{2 s_1}}.
$$
\begin{lemma}\label{lem:h-global-maximum}
For each $a>0$, the function $\rho\mapsto h(a,\cdot)$ has a unique global maximum and there exists $a_0=a_0(\mu)$ such that the maximum value
satisfies
\begin{equation*}
\begin{cases}
    \max\limits_{\rho>0}h(a,\rho)>0&if \ a<a_0;\\
    \max\limits_{\rho>0}h(a,\rho)=0&if \ a=a_0;\\
    \max\limits_{\rho>0}h(a,\rho)<0&if \ a>a_0,\\
\end{cases}
\end{equation*}
where
\begin{equation}\label{eqn:definition-a-0-1}
 a_0 :=\mu^{-{\frac{2s_1}{2qs_1 -N(q-2)}}}\left(\frac{1}{2K_{0}}\right)^{\frac{N}{4s_1 }}
\end{equation}
with

\begin{equation*}
\begin{aligned}
K_0:&=
\frac{1}{2_{s_1}^{*}}\frac{1}{{{\mathcal{S} }^{\frac{2_{s_1}^{*}}{2}}}}\left(\frac{C_{N,s_1,q}(4s_1 -N(q-2))}{2s_1q}\cdot\frac{N \mathcal{S}^{\frac{2_{s_1}^{*}}{2}}}{2s_1}\right)^{\frac{8 s_1 ^{2}}{N(2qs_1 -N(q-2))}}\\
&\ \ +\frac{ C_{N,s_1,q}}{q}\left(\frac{ C_{N,s_1,q}(4s_1 -N(q-2))}{2s_1q}\cdot\frac{N \mathcal{S}^{\frac{2_{s_1}^{*}}{2}}}{2s_1}\right)^{\frac{(N-2s_1 )(N(q-2)-4s_1 )}{N(2qs_1 -N(q-2))}}.
\end{aligned}
\end{equation*}
\end{lemma}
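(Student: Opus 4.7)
The plan is to treat $h(a,\cdot)$ as a one-variable function of $\rho>0$ for each fixed $a>0$, identify its unique critical point explicitly, and then read off the threshold $a_0$ from the resulting closed-form expression for the maximum.

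First, I would analyze the boundary behaviour. Since $2<q<2+\frac{4s_2}{N}<2+\frac{4s_1}{N}$, the exponent $\frac{N(q-2)-4s_1}{2s_1}$ is strictly negative while $\frac{4s_1}{N-2s_1}$ is strictly positive, and both $\rho$-dependent terms are subtracted from $\tfrac12$. Hence $h(a,\rho)\to-\infty$ both as $\rho\to 0^{+}$ (driven by the $L^{q}$ term through its negative power of $\rho$) and as $\rho\to\infty$ (driven by the Sobolev-critical term through its positive power). By continuity, $h(a,\cdot)$ attains a maximum on $(0,\infty)$.

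Next, I would prove that the critical point is unique. Computing $\partial_\rho h(a,\rho)$ and moving the two resulting monomials to opposite sides yields an equation of the shape
\begin{equation*}
A\,\rho^{\frac{4s_1}{N-2s_1}-\frac{N(q-2)-4s_1}{2s_1}}=B(a,\mu),
\end{equation*}
where $A>0$ depends only on $N$, $s_1$ and $\mathcal S$, and $B(a,\mu)$ is a positive constant multiple of $\mu\,a^{(2qs_1-N(q-2))/(2s_1)}$. Since the exponent on the left-hand side is strictly positive, this equation admits a unique solution $\rho^{\ast}=\rho^{\ast}(a,\mu)>0$, which must be the global maximum because $h(a,\cdot)$ has no other critical points and decays to $-\infty$ at both endpoints.

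Then I would substitute $\rho^{\ast}$ back into $h$. Using the identity $\frac{1}{2_{s_1}^{\ast}}\cdot\frac{4s_1}{N-2s_1}=\frac{2s_1}{N}$ and bookkeeping the powers, each of the two negative terms evaluated at $\rho^{\ast}$ becomes a constant multiple of the \emph{same} quantity $\bigl(a\,\mu^{\frac{2s_1}{2qs_1-N(q-2)}}\bigr)^{\frac{4s_1}{N}}$; combining them gives
\begin{equation*}
\max_{\rho>0}h(a,\rho)=\frac{1}{2}-K_0\,\bigl(a\,\mu^{\frac{2s_1}{2qs_1-N(q-2)}}\bigr)^{\frac{4s_1}{N}},
\end{equation*}
where the two summands of $K_0$ arise respectively from the Sobolev-critical term and the $L^{q}$ term at $\rho^{\ast}$, with exponents $\frac{8s_1^{2}}{N(2qs_1-N(q-2))}$ and $\frac{(N-2s_1)(N(q-2)-4s_1)}{N(2qs_1-N(q-2))}$ coming from the ratios $\alpha_1/\tau$ and $\alpha_2/\tau$ for $\tau=\frac{4s_1}{N-2s_1}-\frac{N(q-2)-4s_1}{2s_1}$. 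The map $a\mapsto\max_{\rho>0}h(a,\rho)$ is therefore strictly decreasing, so solving $\max_{\rho>0}h(a,\rho)=0$ yields the unique threshold $a_0$ given by \eqref{eqn:definition-a-0-1}, and the three sign cases follow.

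The main obstacle is purely algebraic bookkeeping: one has to track the exponents of $\rho$, $a$ and $\mu$ through the substitution of $\rho^{\ast}$ carefully so that the two residual terms recombine into a single monomial in $a\,\mu^{2s_1/(2qs_1-N(q-2))}$ with precisely the coefficient $K_0$ of the statement. No deeper idea is needed beyond the homogeneity encoded in the fractional Gagliardo--Nirenberg and Sobolev inequalities.
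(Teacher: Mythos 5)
Your proposal is correct and follows essentially the same route as the paper: both locate the unique critical point $\rho^{\ast}=A_{\mu}^{\frac{2s_1(N-2s_1)}{N(2qs_1-N(q-2))}}a^{\frac{N-2s_1}{N}}$ from the first-order condition, substitute back to obtain $\max_{\rho>0}h(a,\rho)=\tfrac12-K_{\mu}a^{\frac{4s_1}{N}}$ with $K_{\mu}=K_0\,\mu^{\frac{8s_1^2}{N(2qs_1-N(q-2))}}$ (via the same exponent identity you invoke), and solve for the zero to get $a_0$. The exponent bookkeeping in your sketch checks out against the stated $K_0$.
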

\begin{proof}
Note that $\frac{N(q-2)-4s_1 }{2 s_1}<0$. Then
based on simple calculations, we can infer that $h(a,\rho)$  has a unique critical point $\rho_{0}$, which is a
global maximum and
\begin{equation}\label{eqn:t-a-A-1}\small
\rho_{a}=\left(\frac{\mu C_{N,s_1,q}(4s_1 -N(q-2))}{2s_1q}\cdot\frac{N \mathcal{S}^{\frac{2_{s_1}^{*}}{2}}}{2s_1}\right)^{\frac{(N-2s_1 )2s_1}{N(2qs_1 -N(q-2))}}a^{\frac{N-2s_1}{N}}
:=A_{\mu}^{\frac{(N-2s_1 )2s_1}{N(2qs_1 -N(q-2))}}a^{\frac{N-2s_1}{N}}.
\end{equation}
It results that the maximum value
\begin{equation*}
\begin{aligned}
h(a,\rho_a )&=\frac{1}{2}-\frac{1}{2_{s_1}^{*}}\frac{1}{{{\mathcal{S} }^{\frac{2_{s_1}^{*}}{2}}}}A_\mu^{\frac{8 s_1 ^{2}}{N(2qs_1 -N(q-2))}}a^{\frac{4s_1 }{N}}-\frac{\mu C_{N,s_1,q}}{q}A_\mu^{\frac{(N-2s_1 )(N(q-2)-4s_1 )}{N(2qs_1 -N(q-2))}}a^{\frac{4s_1 }{N}}\\
&:=\frac{1}{2}-K_\mu a^{\frac{4s_1 }{N}}.
\end{aligned}
\end{equation*}
This implies that the proof can be finished if we choose
$$
a_0 =\left(\frac{1}{2K_{\mu}}\right)^{\frac{N}{4s_1 }}=\mu^{-{\frac{2s_1}{2qs_1 -N(q-2)}}}\left(\frac{1}{2K_{0}}\right)^{\frac{N}{4s_1 }},
$$
 where we have used the fact that
$$
\frac{(N-2s_1 )(N(q-2)-4s_1 )}{N(2qs_1 -N(q-2))}+1=\frac{8 s_1 ^{2}}{N(2qs_1 -N(q-2))}.
$$
\end{proof}
\begin{lemma}\label{lem:h-a2-t2}
Let $(a_1,\rho_1)\in(0,\infty)\times(0,\infty)$ be such that $h(a_1 ,\rho_1)\geq0$. Then for any $a_2 \in(0, a_1 ]$, it follows that
$$
h(a_2 ,\rho_2)\geq 0\ \text{if}\ \rho_2 \in \left[\frac{a_2}{a_1}\rho_1, \rho_1\right].
$$
\end{lemma}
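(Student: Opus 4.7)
The plan is to combine two elementary monotonicity properties of $h$ with the fact (essentially from the previous lemma) that for each fixed $a>0$ the function $\rho\mapsto h(a,\rho)$ has a unique global maximum, so that its superlevel set $\{\rho>0 : h(a,\rho)\geq 0\}$ is a closed interval whenever nonempty. The task then reduces to verifying that both endpoints $\rho=\tfrac{a_2}{a_1}\rho_1$ and $\rho=\rho_1$ lie in this interval for $a=a_2$.

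For the right endpoint I would read off from the explicit formula
$$h(a,\rho)=\tfrac{1}{2}-c_1 \rho^{\alpha}-c_2\, a^{\beta_1}\rho^{\beta_2},$$
with $\alpha:=\tfrac{4s_1}{N-2s_1}>0$, $\beta_1:=\tfrac{2N-q(N-2s_1)}{2s_1}>0$ (noting $q<2+\tfrac{4s_1}{N}<2_{s_1}^{*}$), $\beta_2:=\tfrac{N(q-2)-4s_1}{2s_1}<0$, and positive constants $c_1,c_2$, that $a\mapsto h(a,\rho)$ is strictly decreasing for each $\rho>0$. Hence
$$h(a_2,\rho_1)\geq h(a_1,\rho_1)\geq 0.$$

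For the left endpoint the key algebraic identity is $\beta_1+\beta_2=q-2>0$. Introducing the auxiliary one-parameter function $g(t):=h(ta_1,\,t\rho_1)$, this identity collapses the two $t$-factors of the cubic-type term into a single power, yielding
$$g(t)=\tfrac{1}{2}-c_1\rho_1^{\alpha}\,t^{\alpha}-c_2\,a_1^{\beta_1}\rho_1^{\beta_2}\,t^{q-2},$$
which is strictly decreasing in $t>0$ since both $\alpha>0$ and $q-2>0$. Evaluating at $t=a_2/a_1\in(0,1]$ gives
$$h\!\left(a_2,\tfrac{a_2}{a_1}\rho_1\right)=g\!\left(\tfrac{a_2}{a_1}\right)\geq g(1)=h(a_1,\rho_1)\geq 0.$$

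With both endpoints verified, the interval structure of the superlevel set $\{\rho>0:h(a_2,\rho)\geq 0\}$ delivers $h(a_2,\rho)\geq 0$ throughout $[\tfrac{a_2}{a_1}\rho_1,\rho_1]$, completing the argument. No serious obstacle arises; the only step that requires a moment of thought is recognizing that the diagonal rescaling $(a,\rho)\mapsto(ta_1,t\rho_1)$ reduces the perturbation term to a pure $t^{\beta_1+\beta_2}=t^{q-2}$, which is exactly what makes the left-endpoint bound fall out as cleanly as the right-endpoint one.
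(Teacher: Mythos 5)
Your proof is correct and follows essentially the same route as the paper: monotonicity in $a$ for the right endpoint, a scaling computation for the left endpoint (your identity $\beta_1+\beta_2=q-2$ is exactly the ``direct calculation'' the paper leaves implicit), and the unimodality of $\rho\mapsto h(a_2,\rho)$ from the preceding lemma to fill in the interval.
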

\begin{proof}
Obviously, the function $a\to h(\cdot,\rho)$ is non-increasing, then we have that
\begin{equation}\label{eqn:h-a2-a1}
 h(a_2 ,\rho_1)\geq h(a_1 ,\rho_1)\geq 0.
\end{equation}
Moreover, due to $a_2\leq a_1$, the direct calculation implies that
\begin{equation}\label{eqn:h-a2-t1}
  h(a_2 ,\frac{a_2 }{a_1}\rho_1) \geq h(a_1 ,\rho_1)\geq 0.
\end{equation}
By Lemma \ref{lem:h-global-maximum}, we know that $h(a_2,\rho)$ has a unique critical point with respect
to $\rho>0$, which is a unique global maximum.
Thus, combining \eqref{eqn:h-a2-a1} and \eqref{eqn:h-a2-t1}, we can conclude that
$$
h(a_2 ,\rho_2)\geq 0\ \text{if}\ \rho_2 \in \left[\frac{a_2}{a_1}\rho_1, \rho_1\right].
$$
The proof is complete.
\end{proof}

Now let $a_0 >0$ be given by \eqref{eqn:definition-a-0-1} and $\rho_0:= \rho_{a_0}>0$ be determined by \eqref{eqn:t-a-A-1}.
By Lemmas \ref{lem:h-global-maximum} and \ref{lem:h-a2-t2}, we have $h(a_0, \rho_0)=0$ and $h(a, \rho_0)>0$ for all $a\in(0, a_0)$. Moreover, the inequality \eqref{eqn:E-mu-dayu-h} implies that we can define
$$A_{a,\rho_0}:=\{u\in S_a:|(-\Delta)^{\frac{s_1}{2}}u|_2 <\rho_{0}\}
$$
 and consider the following local minimization problem: for any $a\in (0, a_0)$,
\begin{equation}\label{eqn:definition-gamma-c-1}
  \overline{\gamma_{\mu}}(a):=\inf_{u\in A_{a,\rho_0}} E(u).
\end{equation}
\begin{lemma}\label{lem:gamma-a-continuous-1}
For any $a\in(0, a_0 )$, there hold that
\begin{itemize}
  \item [$(i)$]
  $$
 \overline{\gamma_{\mu}}(a)= \inf_{u\in A_{a,\rho_0}} E(u)<0<\inf_{u\in \partial A_{a,\rho_0}} E(u);
  $$
  \item [$(ii)$] $a\mapsto \overline{\gamma_{\mu}}(a)$ is a continuous mapping;
  \item [$(iii)$] for all $c\in(0, a)$,  $\overline{\gamma_{\mu}}(a)\leq \overline{\gamma_{\mu}}(c)+\overline{\gamma_{\mu}}(\sqrt{a^2-c^2})$ and if $\overline{\gamma_{\mu}}(c)$ is
  reached then the inequality is strict.
\end{itemize}
\end{lemma}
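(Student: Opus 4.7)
The plan is to combine the dilation $t\star u$ with the Gagliardo--Nirenberg-based bound \eqref{eqn:E-mu-dayu-h}. To show $\overline{\gamma_\mu}(a)<0$, I would fix any $u\in S_a$ and examine $\Phi_u(t)=E(t\star u)$ as $t\to-\infty$: since $q\kappa_q=\frac{N(q-2)}{2}<2s_2<2s_1<p\kappa_p$ (the last because $p=2_{s_1}^{*}$ gives $p\kappa_p=\frac{2Ns_1}{N-2s_1}>2s_1$), the slowest-decaying exponential is the negative $L^q$ term, so $\Phi_u(t)<0$ for all sufficiently negative $t$, while simultaneously $|(-\Delta)^{s_1/2}(t\star u)|_2=e^{s_1t}|(-\Delta)^{s_1/2}u|_2\to 0<\rho_0$, placing $t\star u$ inside $A_{a,\rho_0}$ with negative energy. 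For the boundary estimate, if $|(-\Delta)^{s_1/2}u|_2=\rho_0$ then \eqref{eqn:E-mu-dayu-h} gives $E(u)\geq\rho_0^2h(a,\rho_0)$; a direct computation of $\partial_ah$ shows $h(\cdot,\rho_0)$ is strictly decreasing, and since $h(a_0,\rho_0)=0$ by Lemma~\ref{lem:h-global-maximum}, we conclude $h(a,\rho_0)>0$ for $a<a_0$.

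\textbf{Part (ii).} Fix $a\in(0,a_0)$ and $a_n\to a$ in $(0,a_0)$. For upper semicontinuity, given $\varepsilon>0$ I would pick $u\in A_{a,\rho_0}$ with $E(u)<\overline{\gamma_\mu}(a)+\varepsilon$, and use the rescaling $v_n:=\tfrac{a_n}{a}u\in S_{a_n}$: then $|(-\Delta)^{s_1/2}v_n|_2\to|(-\Delta)^{s_1/2}u|_2<\rho_0$, so $v_n\in A_{a_n,\rho_0}$ for $n$ large, and continuity of $E$ in $H^{s_1}$ gives $\limsup_n\overline{\gamma_\mu}(a_n)\leq E(u)<\overline{\gamma_\mu}(a)+\varepsilon$. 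For lower semicontinuity, take near-minimizers $u_n\in A_{a_n,\rho_0}$ with $E(u_n)\leq\overline{\gamma_\mu}(a_n)+1/n$; by the upper bound and Part~(i) we have $E(u_n)\leq -\eta<0$ for $n$ large, and \eqref{eqn:E-mu-dayu-h} together with the uniform positivity of $h(a_n,\rho)$ near $\rho=\rho_0$ (since $a_n$ stays below $a_0$) forces $|(-\Delta)^{s_1/2}u_n|_2\leq\rho_0-\delta$ for some $\delta>0$ independent of $n$. Hence $w_n:=\tfrac{a}{a_n}u_n$ lies in $A_{a,\rho_0}$ for $n$ large, and $\overline{\gamma_\mu}(a)\leq E(w_n)=E(u_n)+o(1)$ closes the argument.

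\textbf{Part (iii).} Set $b:=\sqrt{a^2-c^2}\in(0,a_0)$. Given $\varepsilon>0$, choose $u\in A_{c,\rho_0}$ and $v\in A_{b,\rho_0}$ approximately realizing $\overline{\gamma_\mu}(c)$ and $\overline{\gamma_\mu}(b)$; by density of $C_c^\infty(\mathbb R^N)$ in $H^{s_1}(\mathbb R^N)$ I may assume $u,v$ are smooth with compact support. Define the translated superposition $w_R(x):=u(x)+v(x-Re_1)$. For $R$ so large that $\mathrm{supp}\,u$ and $\mathrm{supp}\,v(\cdot-Re_1)$ are disjoint, we get $|w_R|_2^2=c^2+b^2=a^2$ and the $L^p$, $L^q$ norms decouple exactly; the fractional cross terms $\int_{\mathbb R^N}u(x)(-\Delta)^{s_i}v(x-Re_1)\,dx$ tend to $0$ as $R\to\infty$ because $(-\Delta)^{s_i}v$ decays polynomially at infinity while $u$ has compact support, so by dominated convergence $E(w_R)\to E(u)+E(v)$ and $|(-\Delta)^{s_1/2}w_R|_2^2\to|(-\Delta)^{s_1/2}u|_2^2+|(-\Delta)^{s_1/2}v|_2^2$.

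The main obstacle in (iii) is guaranteeing $w_R\in A_{a,\rho_0}$, because the $H^{s_1}$ seminorms add in the limit and their sum could exceed $\rho_0^2$. I would handle this by first applying a $t\star$ dilation to both $u$ and $v$, choosing $t$ at the local-minimum parameter of the fiber (analogous to Lemma~\ref{lem:t-u-P-Property} in the Sobolev-subcritical regime) so as to lower each $H^{s_1}$ seminorm while keeping the energy non-increasing; the bound $|(-\Delta)^{s_1/2}u|_2^2+|(-\Delta)^{s_1/2}v|_2^2<\rho_0^2$ can then be extracted from the uniform control on near-minimizers established in Part~(ii). With $w_R\in A_{a,\rho_0}$ secured, one obtains $\overline{\gamma_\mu}(a)\leq E(w_R)\to E(u)+E(v)\leq\overline{\gamma_\mu}(c)+\overline{\gamma_\mu}(b)+2\varepsilon$, and $\varepsilon\to 0$ yields the weak inequality. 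For strict subadditivity when $\overline{\gamma_\mu}(c)$ is attained by $u_c$, I would use $u_c$ as the exact minimizer (no error term on the $c$-side), and exploit that $u_c$ satisfies the Euler--Lagrange equation with a strictly negative Lagrange multiplier $\lambda_c<0$; a classical Lions-type comparison then produces a strict decrement $\delta>0$ in the energy of $w_R$ below $\overline{\gamma_\mu}(c)+E(v)$, which survives the limits $R\to\infty$ and $\varepsilon\to 0$ and gives strict inequality.
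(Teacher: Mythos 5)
Parts (i) and (ii) of your proposal are correct and essentially coincide with the paper's argument; the only cosmetic difference is that the paper extracts the bound $|(-\Delta)^{\frac{s_1}{2}}u_n|_2<\frac{a_n}{a}\rho_0$ for negative-energy competitors directly from Lemma \ref{lem:h-a2-t2}, which is sharper than your ``$\rho_0-\delta$'' but serves the same purpose.

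Part (iii) is where you depart from the paper, and your route has two genuine gaps. First, the membership $w_R\in A_{a,\rho_0}$: the control you carry over from Part (ii) gives only $|(-\Delta)^{\frac{s_1}{2}}u|_2,\,|(-\Delta)^{\frac{s_1}{2}}v|_2\le\rho_0-\delta$, and two quantities each below $\rho_0-\delta$ can easily have squares summing above $\rho_0^2$; your fallback of dilating each function to ``the local-minimum parameter of the fiber'' presupposes a structure result like Lemma \ref{lem:t-u-P-Property} that has not been established in the Sobolev-critical setting, and in any case dilation changes the energy, so near-minimality is not preserved. A correct fix exists but you did not supply it: Lemma \ref{lem:h-a2-t2} applied with $a_1=a_0$ shows that any negative-energy $u\in A_{c,\rho_0}$ satisfies $|(-\Delta)^{\frac{s_1}{2}}u|_2<\frac{c}{a_0}\rho_0$, whence $|(-\Delta)^{\frac{s_1}{2}}u|_2^2+|(-\Delta)^{\frac{s_1}{2}}v|_2^2<\frac{c^2+b^2}{a_0^2}\rho_0^2=\frac{a^2}{a_0^2}\rho_0^2<\rho_0^2$. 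Second, and more seriously, the strict inequality: the appeal to a ``strictly negative Lagrange multiplier $\lambda_c<0$'' is unjustified --- the paper explicitly remarks that the sign of the multiplier cannot be determined when $p=2_{s_1}^{*}$ --- and no concrete mechanism is given by which a ``Lions-type comparison'' would produce a quantitative energy decrement that survives the limits $R\to\infty$ and $\varepsilon\to0$. The paper avoids all of this by never gluing functions: for $\theta\in(1,a/c]$ and a negative-energy competitor $u_n\in A_{c,\rho_0}$ one has $\theta u_n\in A_{\theta c,\rho_0}$ (again via Lemma \ref{lem:h-a2-t2}) and $E(\theta u_n)<\theta^2E(u_n)$ because every nonlinear term carries a power of $\theta$ strictly larger than $2$; hence $\overline{\gamma_{\mu}}(\theta c)\le\theta^2\overline{\gamma_{\mu}}(c)$, with strict inequality if $\overline{\gamma_{\mu}}(c)$ is attained, and both the weak and the strict subadditivity then follow from writing $\overline{\gamma_{\mu}}(a)=\frac{a^2-c^2}{a^2}\overline{\gamma_{\mu}}(a)+\frac{c^2}{a^2}\overline{\gamma_{\mu}}(a)$ and applying this homogeneity estimate to each summand. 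You should adopt this scaling argument for (iii).
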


\begin{proof}
$(i)$ For any $u\in \partial A_{a,\rho_0}$, we have $|(-\Delta)^{\frac{s_1}{2}}u|_2 ^2=\rho_{0}^{2}$. It follows from \eqref{eqn:E-mu-dayu-h} that
$$
E(u)\geq |(-\Delta)^{\frac{s_1}{2}}u|_2 ^2 h(a, |(-\Delta)^{\frac{s_1}{2}}u|_2)=\rho_{0}^{2}h(a, \rho_{0})>0.
$$
Furthermore, from the definition of $\Phi_{u}(t)$,  we get $\Phi_{u}(t)\to 0^-$ as $t\to -\infty$ since $\kappa_q q<2s_2$. Therefore, there exists $t_0 <<-1$ such that $|(-\Delta)^{\frac{s_1}{2}}(t_0\star u)|_2 ^2 = e^{2s_1 t_0}|(-\Delta)^{\frac{s_1}{2}}u|_2 ^2< \rho_{0}^{2}$ and $E(t_0\star u)=\Phi_{u}(t_0)<0$. This implies that $\overline{\gamma_{\mu}}(a)<0$.\par
$(ii)$ For any $a\in(0, a_0)$,  let $\{a_n\}\subset(0, a_0)$ be such that $a_n \to a$ as $n\rightarrow\infty$. Then from the definition of $\overline{\gamma_{\mu}}(a_n)$ and item $(i)$, we deduce that for any $\varepsilon>0$  sufficiently small, there exists $u_n \in A_{a_n,\rho_0}$ such that
\begin{equation}\label{eqn:E-u-n-gamma-1}
  E(u_n)\leq \overline{\gamma_{\mu}}(a_n)+\varepsilon\ \text{and}\  E(u_n)<0.
\end{equation}
Setting $v_n:=\frac{a}{a_n } u_n \in S_a$, we claim  that $v_n\in A_{a,\rho_0}$. Indeed, if $a_n \geq a$, then
$$
|(-\Delta)^{\frac{s_1}{2}}v_n |_2 ^2 =\bigg(\frac{a}{a_n }\bigg)^{2} |(-\Delta)^{\frac{s_1}{2}}u_n |_2 ^2<\rho_0 ^{2}.
$$
If $a_n <a$, by Lemma \ref{lem:h-a2-t2}, we can get that $h(a_n, \rho)\geq 0$ for any $\rho \in\left[\frac{a_n }{a}\rho_0,\;\rho_0\right]$. Taking account of \eqref{eqn:E-mu-dayu-h} and \eqref{eqn:E-u-n-gamma-1}, we have $h(a_n, |(-\Delta)^{\frac{s_1}{2}}u_n |_2)<0$, which implies that $|(-\Delta)^{\frac{s_1}{2}}u_n |_2< \frac{a_n }{a}\rho_0$
and then
$$
|(-\Delta)^{\frac{s_1}{2}}v_n |_2 ^2<\bigg(\frac{a}{a_n }\bigg)^{2}\bigg(\frac{a_n }{a}\rho_0\bigg)^2=\rho_0 ^2.
$$
Namely, the claim holds.
Hence, we obtain that
$$
\overline{\gamma_{\mu}}(a)\leq E(v_n)=E(u_n)+[E(v_n)-E(u_n)],
$$
where
\begin{equation*}
\begin{aligned}
  E(v_n)-E(u_n)&=\frac{1}{2}\bigg[\bigg(\frac{a}{a_n }\bigg)^{2}-1\bigg]\left(|(-\Delta)^{\frac{s_1}{2}}u_n |_2 ^2+|(-\Delta)^{\frac{s_2}{2}}u_n |_2 ^2\right)\\
  &\ \ -\frac{\mu}{q}\bigg[\bigg(\frac{a}{a_n }\bigg)^{q}-1\bigg]|u_n |_{q}^{q}-\frac{1}{2_{s_1}^{*}}\bigg[\bigg(\frac{a}{a_n }\bigg)^{2_{s_1}^{*}}-1\bigg]|u_n|_{2_{s_1}^{*}}^{2_{s_1}^{*}}.
\end{aligned}
\end{equation*}
Since $\{
u_n\}$ is bounded in $H^{s_1}(\mathbb R^N)$, then as $n\to \infty$, we have
\begin{equation}\label{eqn:gamma-c-E-yn-1}
  \overline{\gamma_{\mu}}(a)\leq E(v_n)=E(u_n)+o_{n}(1).
\end{equation}
Combining \eqref{eqn:E-u-n-gamma-1} and \eqref{eqn:gamma-c-E-yn-1}, we conclude that
$$
\overline{\gamma_{\mu}}(a)\leq\overline{\gamma_{\mu}}(a_n)+\varepsilon+o_{n}(1).
$$
On the other hand, let $u\in A_{a,\rho_0}$ be such that
$$
E(u)\leq  \overline{\gamma_{\mu}}(a)+\varepsilon\ \text{and} \  E(u)<0.
$$
Similarly, setting $w_n=\frac{a_n}{a} u\in S_{a_n}$, we can infer that $|(-\Delta)^{\frac{s_1}{2}}w_n |_2<\rho_0$ and
$$
\overline{\gamma_{\mu}}(a_n)\leq \overline{\gamma_{\mu}}(a)+\varepsilon+o_{n}(1).
$$
Due to $\varepsilon>0$ is arbitrary, we conclude that $\overline{\gamma_{\mu}}(a_n)\to \overline{\gamma_{\mu}}(a)$ as $n\rightarrow\infty$.
The item $(ii)$ is proved.

$(iii)$ Noting that $c<a<a_0$, we can deduce that for any $\varepsilon>0$  sufficiently small, there exists $u_n \in A_{c,\rho_0}$ such that
$$
  E(u_n)\leq \overline{\gamma_{\mu}}(c)+\varepsilon\ \text{and}\  E(u_n)<0.
$$
Similar to the proof of $(ii)$, we have
$|(-\Delta)^{\frac{s_1}{2}}u_n |_2 ^2<\left(\frac{c}{a}\rho_0\right) ^{2}.$
Let $\theta\in (1,\frac{a}{c}]$ and $v_n:=\theta u_n$, then $|v_n|_2^2=\theta^2 c^2$ and
$|(-\Delta)^{\frac{s_1}{2}}v_n |_2 ^2 =\theta^2|(-\Delta)^{\frac{s_1}{2}}u_n |_2 ^2<\rho_0^2$, which imply that $v_n\in A_{\theta c,\rho_0}$.
Thus, we can conclude that
$$
\overline{\gamma_\mu}(\theta c)\leq E(v_n)<\theta^2 E(u_n)\leq \theta^2(\overline{\gamma_{\mu}}(c)+\varepsilon),
$$
which yields that
\begin{equation*}
\overline{\gamma_\mu}(\theta c)\leq \theta^2\overline{\gamma_{\mu}}(c)
\end{equation*}
and the inequality is strict if $\overline{\gamma_{\mu}}(c)$ reached.
It follows that
$$
\begin{aligned}
\overline{\gamma_{\mu}}(a)&=\frac{a^2-c^2}{a^2}
\overline{\gamma_{\mu}}(a)+\frac{c^2}{a^2}\overline{\gamma_{\mu}}(a)\\
&=\frac{a^2 -c^2}{a^2}\overline{\gamma_\mu}\left(\frac{a}{\sqrt{a^2-c^2}}
\sqrt{a^2-c^2}\right)+
\frac{c^2}{a^2}\overline{\gamma_\mu}\left(\frac{a}{c} c\right)\\
&\leq \overline{\gamma_{\mu}}(\sqrt{a^2-c^2})+\overline{\gamma_{\mu}}(c)
\end{aligned}
$$
and the inequality is also strict if $\overline{\gamma_{\mu}}(c)$ reached. The proof is complete.
\end{proof}

In the following, we will prove that $\overline{\gamma_{\mu}}(a)$ is achieved. Before that, we  exclude the possibility of vanishing  occurs in the following sense.
\begin{lemma}\label{lem:vanishing-u-n-1}
For any $ a\in (0, a_0 )$, let $\{u_n\}\subset A_{a,\rho_0}$ be such that $E(u_n )\to \overline{\gamma_{\mu}}(a)$ as $n\rightarrow\infty$. Then there exist $\beta_0 >0$ and a sequence $\{y_n\}\subset \mathbb{R}^{N}$ such that
\begin{equation}\label{eqn:un-yn-R-beta-1}
\int_{B(y_n,R)} |u_n|^2 \geq \beta_0>0\ \text{for some}\ R>0.
\end{equation}
\end{lemma}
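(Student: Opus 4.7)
The plan is to argue by contradiction via a fractional Lions-type concentration lemma, using the choice of $\rho_0$ as the root of $h(a_0,\cdot)=0$ to rule out vanishing.

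First I would record that $\{u_n\}$ is bounded in $H^{s_1}(\mathbb{R}^N)$: since $u_n\in A_{a,\rho_0}$ we already have $|u_n|_2=a$ and $|(-\Delta)^{\frac{s_1}{2}}u_n|_2<\rho_0$, and the interpolation inequality \eqref{eqn:inter-inequality} then bounds $|(-\Delta)^{\frac{s_2}{2}}u_n|_2$. Suppose, for contradiction, that the conclusion fails. Then for every $R>0$,
\begin{equation*}
\lim_{n\to\infty}\sup_{y\in\mathbb{R}^N}\int_{B(y,R)}|u_n|^2\,dx=0.
\end{equation*}
By the standard fractional Lions-type vanishing lemma (see, e.g., the references already cited in the paper for $H^s$ embeddings), this forces $u_n\to 0$ strongly in $L^r(\mathbb{R}^N)$ for every $r\in(2,2_{s_1}^{*})$. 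In particular, since $q\in(2,2_{s_1}^{*})$, we conclude $|u_n|_q^q=o_n(1)$.

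Next I would plug this into $E(u_n)$. Dropping the non-negative $(-\Delta)^{s_2/2}$ term and using the Sobolev inequality \eqref{eqn:sobolev-critical-inequality} with $p=2_{s_1}^{*}$ gives
\begin{equation*}
E(u_n)\ge\frac12|(-\Delta)^{\frac{s_1}{2}}u_n|_2^{2}-\frac{1}{2_{s_1}^{*}}\mathcal{S}^{-\frac{2_{s_1}^{*}}{2}}|(-\Delta)^{\frac{s_1}{2}}u_n|_2^{2_{s_1}^{*}}-\frac{\mu}{q}|u_n|_q^{q}.
\end{equation*}
Set $t_n:=|(-\Delta)^{\frac{s_1}{2}}u_n|_2\in[0,\rho_0)$. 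The crucial step is to show that the map
\begin{equation*}
g(\rho):=\frac{\rho^2}{2}-\frac{1}{2_{s_1}^{*}}\mathcal{S}^{-\frac{2_{s_1}^{*}}{2}}\rho^{2_{s_1}^{*}}=\rho^2\Bigl(\frac12-\frac{1}{2_{s_1}^{*}}\mathcal{S}^{-\frac{2_{s_1}^{*}}{2}}\rho^{2_{s_1}^{*}-2}\Bigr)
\end{equation*}
is strictly positive on $(0,\rho_0]$. This follows from the identity $h(a_0,\rho_0)=0$ from Lemma \ref{lem:h-global-maximum}, which rearranges to
\begin{equation*}
\frac12-\frac{1}{2_{s_1}^{*}}\mathcal{S}^{-\frac{2_{s_1}^{*}}{2}}\rho_0^{2_{s_1}^{*}-2}=\frac{\mu C_{N,s_1,q}}{q}a_0^{\frac{2N-q(N-2s_1)}{2s_1}}\rho_0^{\frac{N(q-2)-4s_1}{2s_1}}>0,
\end{equation*}
together with monotonicity of $\rho\mapsto\rho^{2_{s_1}^{*}-2}$. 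Hence $g(t_n)\ge 0$ for every $n$, and combining everything yields
\begin{equation*}
\liminf_{n\to\infty}E(u_n)\ge\liminf_{n\to\infty}g(t_n)-\tfrac{\mu}{q}|u_n|_q^{q}\ge 0.
\end{equation*}

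This contradicts $E(u_n)\to\overline{\gamma_{\mu}}(a)<0$ established in Lemma \ref{lem:gamma-a-continuous-1}$(i)$, completing the proof. The main obstacle here is not technical but bookkeeping: one must be careful to exploit that $\rho_0$ is defined via the \emph{threshold} value $a_0$ (so that $h(a_0,\rho_0)=0$), rather than via $a<a_0$, in order to guarantee that the pure Sobolev part $g$ is already non-negative on $[0,\rho_0]$ without invoking the subcritical $L^q$-perturbation, which has been killed by vanishing.
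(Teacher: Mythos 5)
Your proposal is correct and follows essentially the same route as the paper: assume vanishing, invoke the fractional Lions-type lemma to get $|u_n|_q^q\to 0$, drop the $(-\Delta)^{s_2/2}$ term, bound the critical term via the Sobolev inequality, and use the identity $h(a_0,\rho_0)=0$ to see that the remaining bracket is strictly positive for $|(-\Delta)^{\frac{s_1}{2}}u_n|_2<\rho_0$, contradicting $\overline{\gamma_{\mu}}(a)<0$. The only cosmetic difference is that the paper keeps the quantitative lower bound $E(u_n)\ge\alpha_0|(-\Delta)^{\frac{s_1}{2}}u_n|_2^2+o_n(1)$, which it reuses later in the compactness argument.
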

\begin{proof}
We assume by contradiction that \eqref{eqn:un-yn-R-beta-1} does not hold. Since $\{u_n\}$ is bounded in $H^{s_1}(\mathbb{R}^{N})$, then by  \cite[Lemma 2.2]{Felmer2012}, we infer that $|u_n|_q ^{q}\to 0$ as $n\to \infty$. Thus, it follows from \eqref{eqn:sobolev-critical-inequality} that
\begin{equation*}
\begin{aligned}
E(u_n)&=\frac{1}{2}|(-\Delta)^{\frac{s_1}{2}}u_n |_2^2+\frac{1}{2}|(-\Delta)^{\frac{s_2}{2}}u_n |_2^2-\frac{1}{2_{s_1}^{*}}|u_n|_{2_{s_1}^{*}}^{2_{s_1}^{*}}+o_{n}(1)
\\
&\geq|(-\Delta)^{\frac{s_1}{2}}u_n |_2^2\bigg[\frac{1}{2}-\frac{1}{2_{s_1}^{*}}\frac{1}
{\mathcal{S}_{1}^{\frac{2_{s_1}^{*}}{2}}}|(-\Delta)^{\frac{s_1}{2}}u_n |_2^{\frac{4s_1}{N-2s_1}}\bigg]+o_{n}(1)\\
&\geq |(-\Delta)^{\frac{s_1}{2}}u_n |_2^2\bigg[\frac{1}{2}-\frac{1}{2_{s_1}^{*}}\frac{1}
{\mathcal{S}_{1}^{\frac{2_{s_1}^{*}}{2}}}\rho_{0}^
{\frac{4s_1}{N-2s_1}}\bigg]+o_{n}(1)
\\
&=|(-\Delta)^{\frac{s_1}{2}}u_n |_2^2\left(
\frac{\mu C_{N,s_1,q}}{q}a_{0}^{\frac{2N-q(N-2s_1)}{2 s_1}}\rho_{0}^{\frac{N(q-2)-4s_1 }{2 s_1}}\right)+o_n(1)\\
&:=\alpha_{0}|(-\Delta)^{\frac{s_1}{2}}u_n |_2^2+o_n(1),
\end{aligned}
\end{equation*}
where we have used the fact that $h(a_0, \rho_0)=0$.
This implies that
 $E(u_n)\geq o_{n}(1)$, which contradicts $\overline{\gamma_{\mu}}(a)<0$ and the proof is complete.
\end{proof}

\begin{lemma}\label{lem:hua-M-c-shouxlianxing}
For any $a\in (0,a_0)$, it results that $\overline{\gamma_{\mu}}(a)$ is achieved.
\end{lemma}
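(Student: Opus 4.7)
The plan is to implement the direct method, using a Brezis--Lieb type splitting to handle the Sobolev critical term. Let $\{u_n\}\subset A_{a,\rho_0}$ be a minimizing sequence for $\overline{\gamma_{\mu}}(a)$. By the fractional P\'{o}lya--Szeg\H{o} inequality and the rearrangement inequality for $L^r$-norms, the Schwarz symmetrization $|u_n|^\ast$ still lies in $A_{a,\rho_0}$ and satisfies $E(|u_n|^\ast)\leq E(u_n)$, so we may assume each $u_n$ is nonnegative, radial and non-increasing. Estimate \eqref{eqn:E-mu-dayu-h} shows $\{u_n\}$ is bounded in $H^{s_1}(\mathbb{R}^N)$; up to a subsequence $u_n\rightharpoonup u$ weakly in $H^{s_1}$ and, by the compact embedding $H^{s_1}_r(\mathbb{R}^N)\hookrightarrow L^s(\mathbb{R}^N)$, strongly in $L^s(\mathbb{R}^N)$ for every $s\in(2,2_{s_1}^*)$. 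In particular $|u_n|_q^q\to|u|_q^q$. Set $c:=|u|_2$ and $v_n:=u_n-u$.

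If $u\equiv 0$, the compact embedding gives $|u_n|_q\to 0$; combining this with the Sobolev inequality \eqref{eqn:sobolev-critical-inequality} and the strict positivity of $\frac{1}{2}-\frac{1}{2_{s_1}^*}\mathcal{S}^{-2_{s_1}^*/2}\rho_0^{4s_1/(N-2s_1)}$ (which follows from $h(a_0,\rho_0)=0$ and the positivity of the $\mu$-term in the definition of $h$), we would obtain $E(u_n)\geq o_n(1)$, contradicting $\overline{\gamma_{\mu}}(a)<0$. Hence $c\in(0,a]$. The Brezis--Lieb lemma applied to $|\cdot|_{2_{s_1}^*}^{2_{s_1}^*}$, together with the weak convergence of $(-\Delta)^{s_i/2}u_n$ in $L^2$, yields the splitting
\begin{equation*}
E(u_n)=E(u)+E(v_n)+o_n(1),\qquad |u_n|_2^2=c^2+|v_n|_2^2+o_n(1).
\end{equation*}

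The heart of the argument is to show $c=a$. Suppose, for contradiction, that $c<a$. We first claim $|(-\Delta)^{s_1/2}u|_2<\rho_0$: otherwise the Brezis--Lieb identity would force $|(-\Delta)^{s_1/2}v_n|_2\to 0$ (since $|(-\Delta)^{s_1/2}u_n|_2<\rho_0$), hence $v_n\to 0$ in $H^{s_1}$ by interpolation, so $E(v_n)\to 0$ and $E(u)=\overline{\gamma_{\mu}}(a)<0$; but Lemma \ref{lem:h-a2-t2} together with $c<a<a_0$ gives $h(c,\rho_0)>h(a_0,\rho_0)=0$, so $E(u)\geq\rho_0^2 h(c,\rho_0)>0$, a contradiction. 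Consequently $u\in A_{c,\rho_0}$ and $E(u)\geq\overline{\gamma_{\mu}}(c)$. Since $u\not\equiv 0$, the Brezis--Lieb bound $|(-\Delta)^{s_1/2}v_n|_2^2\leq\rho_0^2-|(-\Delta)^{s_1/2}u|_2^2+o_n(1)$ places $v_n\in A_{c_n,\rho_0}$ eventually, where $c_n:=|v_n|_2\to c':=\sqrt{a^2-c^2}>0$, so $E(v_n)\geq\overline{\gamma_{\mu}}(c_n)$. Passing to the limit using the continuity of $\overline{\gamma_{\mu}}$ (Lemma \ref{lem:gamma-a-continuous-1}(ii)) gives $\overline{\gamma_{\mu}}(a)\geq\overline{\gamma_{\mu}}(c)+\overline{\gamma_{\mu}}(c')$. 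Comparison with the reverse inequality in Lemma \ref{lem:gamma-a-continuous-1}(iii) forces equality, which in turn forces $E(u)=\overline{\gamma_{\mu}}(c)$; so $\overline{\gamma_{\mu}}(c)$ is attained by $u$, and the strict subadditivity clause of Lemma \ref{lem:gamma-a-continuous-1}(iii) contradicts that same equality.

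Therefore $c=a$, and $v_n\to 0$ in $L^2$. Interpolation with bounded $H^{s_1}$-norm yields $|(-\Delta)^{s_2/2}v_n|_2\to 0$ and $|v_n|_q\to 0$. Invoking the Sobolev inequality and again the strict positivity of $\frac{1}{2}-\frac{1}{2_{s_1}^*}\mathcal{S}^{-2_{s_1}^*/2}\rho_0^{4s_1/(N-2s_1)}$, the residual satisfies $E(v_n)\geq o_n(1)$. Combined with $E(u)\geq\overline{\gamma_{\mu}}(a)$ (as $u\in A_{a,\rho_0}$ by the same interior argument) and $E(u)+\lim E(v_n)=\overline{\gamma_{\mu}}(a)$, we conclude $E(u)=\overline{\gamma_{\mu}}(a)$, so $u$ attains the infimum. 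The main obstacle throughout is the lack of weak continuity of $|\cdot|_{2_{s_1}^*}^{2_{s_1}^*}$; this is resolved by the calibration of $\rho_0$ via $h(a_0,\rho_0)=0$, which ensures that the defect sequence $v_n$ can contribute only nonnegative energy and thereby restores compactness.
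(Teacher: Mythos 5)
Your proof is correct and its core — the Br\'ezis--Lieb splitting $E(u_n)=E(u)+E(v_n)+o_n(1)$, the exclusion of dichotomy $|u|_2=c<a$ via the (strict) subadditivity of $\overline{\gamma_{\mu}}$ from Lemma \ref{lem:gamma-a-continuous-1}(iii), and the recovery of $|(-\Delta)^{s_1/2}v_n|_2\to 0$ from the calibration $h(a_0,\rho_0)=0$ — is the same as the paper's. The one genuine methodological difference is how you restore compactness modulo translations: the paper keeps a general minimizing sequence and invokes a Lions-type non-vanishing result (Lemma \ref{lem:vanishing-u-n-1}, via the concentration function and translates $u_n(\cdot-y_n)$), whereas you symmetrize the minimizing sequence using the fractional P\'olya--Szeg\H{o} inequality and then exploit the compact embedding $H^{s_1}_r(\mathbb{R}^N)\hookrightarrow L^s(\mathbb{R}^N)$, $s\in(2,2^*_{s_1})$, to get $u_n\to u$ in $L^q$ directly; both are legitimate, and yours trades the vanishing lemma for rearrangement inequalities (which the paper uses anyway to establish the qualitative properties of the minimizer). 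One small inaccuracy: in your ruling out of $|(-\Delta)^{s_1/2}u|_2=\rho_0$ when $c<a$, the assertion that $v_n\to 0$ in $H^{s_1}(\mathbb{R}^N)$ is false, since $|v_n|_2^2\to a^2-c^2>0$; however, the consequence you actually use, $E(v_n)\to 0$, still holds because $E$ sees only the seminorms and the $L^q$, $L^{2^*_{s_1}}$ norms, all of which vanish along $v_n$ by the interpolation inequality \eqref{eqn:inter-inequality}, the compact radial embedding, and \eqref{eqn:sobolev-critical-inequality}. With that phrase corrected, the argument is complete; you also correctly identify, as the paper does, that the strict positivity of the coefficient $\frac12-\frac{1}{2^*_{s_1}}\mathcal{S}^{-2^*_{s_1}/2}\rho_0^{4s_1/(N-2s_1)}$, equivalent to the positivity of the $\mu$-term at $(a_0,\rho_0)$, is what forces the defect $v_n$ to carry nonnegative energy and hence to vanish in $\dot H^{s_1}$.
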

\begin{proof}
Let $\{u_n\}\subset A_{a,\rho_0}$ be the minimizing sequence such that $E(u_n )\to \overline{\gamma_{\mu}}(a)$ as $n\rightarrow\infty$. Then
 Lemma \ref{lem:vanishing-u-n-1} indicates that there exists a
sequence $\{y_n\}\subset \mathbb{R}^{N}$ such that
$$
u_n(x-y_n)\rightharpoonup u_a\neq0\quad\text{in }H^{s_1}(\mathbb{R}^{N}).
$$
 In the following, we prove that $w_{n}(x):=u_{n}(x-y_{n})-u_{a}(x)\to0\text{ in }H^{s_1}(\mathbb{R}^{N})$, which shows that $\overline{\gamma_{\mu}}(a)$ is achieved by $u_a\not=0$.  By the Br\'{e}zis-Lieb type lemma, we have
\begin{equation}\label{eqn:E-un-x-yn-1}
|w_n|_{2}^{2}=a^2-|u_a|_{2}^{2}
+o_{n}(1),\ |(-\Delta)^{\frac{s_1}{2}}w_n|_{2}^{2}=|(-\Delta)^{\frac{s_1}{2}} u_n|_{2}^{2}-|(-\Delta)^{\frac{s_1}{2}} u_a|_{2}^{2}+o_{n}(1)
\end{equation}
and
\begin{equation*}
E(w_n)+E(u_a)=E(u_n (x-y_n ))+o_{n}(1),
\end{equation*}
which imply that
$|u_a|_2^2\leq a^2$ and
\begin{equation}\label{eqn:E-un-x-yn-2}
E(u_n)=E(u_n(x-y_n))=E(w_n)+E(u_a)+o_n(1).
\end{equation}
Now we claim that $|u_a|_2^2=a^2$, which means that $|w_n|_2^2=o_n(1)$.
Assume by contradiction that $\tilde a:=|u_a|_2\in (0,a)$. In view of \eqref{eqn:E-un-x-yn-1}, for $n$ large enough, we have $|w_n|_2 ^{2}\leq a^2$ and $|(-\Delta)^{\frac{s_1}{2}}w_n|_{2}^{2}\leq |(-\Delta)^{\frac{s_1}{2}}u_n|_{2}^{2}<\rho_0 ^{2}$. Then we obtain that $w_n \in A_{|w_n|_{2},\rho_0}$ and $E(w_n)\geq \overline{\gamma_{\mu}}(|w_n|_{2})$. By \eqref{eqn:E-un-x-yn-2}, we infer that
$$
\overline{\gamma_{\mu}}(a)=E(w_n)+E(u_a)+o_n(1)\geq \overline{\gamma_{\mu}}(|w_n|_{2})+E(u_a)+o_n(1),
$$
which, together with Lemma \ref{lem:gamma-a-continuous-1}-$(ii)$, yields that
\begin{equation}\label{eqn:gamma-a-dayu-gamma-1}
\overline{\gamma_{\mu}}(a)\geq \overline{\gamma_{\mu}}(\sqrt{a^2-\tilde a^2})+E(u_a).
\end{equation}
Note that $u_a\in A_{\tilde a,\rho_0}$, then $E(u_a)\geq \overline{\gamma_{\mu}}(\tilde a)$.
Hence, it  follows from \eqref{eqn:gamma-a-dayu-gamma-1} and Lemma \ref{lem:gamma-a-continuous-1}-$(iii)$ that whether
 $E(u_a)>\overline{\gamma_{\mu}}(\tilde a)$ or $E(u_a)=\overline{\gamma_{\mu}}(\tilde a)$, we can all obtain the desired contradiction. Thus, the claim holds.

 It remains to prove that $|(-\Delta)^{\frac{s_1}{2}}w_n|_{2}^{2}\to 0$ as $n\rightarrow\infty$.
 By above claim, we know that $|w_n|_{2}^{2} \to 0$.
Moreover,
 $\{w_n\}$ is bounded in $H^{s_1}(\mathbb{R}^{N})$. Hence, \eqref{eqn:G-N-equality} indicates that $|w_n|_{q}^{q} \to 0$. Moreover, from Lemma \ref{lem:vanishing-u-n-1}, we have
\begin{equation}\label{eqn:E-w-n-beta-0}
 E(w_n)\geq \alpha_0|(-\Delta)^{\frac{1}{2}}w_n |_2^2 +o_{n}(1).
\end{equation}
Recalling that
$E(u_n)=E(u_ a)+E(w_n)+o_{n}(1)= \overline{\gamma_{\mu}}(a)+o_n(1)$
and $u_a\in A_{a,\rho_0}$, we have $E(u_ a)\geq \overline{\gamma_{\mu}}(a)$  and then $E(w_n)\leq o_{n}(1)$.  From \eqref{eqn:E-w-n-beta-0}, we deduce that
$|(-\Delta)^{\frac{s_1}{2}}w_n |_2^2\to 0$ and the proof is complete.
\end{proof}

\noindent\textbf{Proof of Theorem \ref{Thm:q-mass-subcritical-p-sobolev-critical}.}
According to Lemma \ref{lem:hua-M-c-shouxlianxing}, we know that there exists $u_a\in S_a$ such that
$E(u_a)=\overline{\gamma_{\mu}}(a)<0$.
This means that problem \eqref{eqn:Mixed-fractioanl-CN} has a solution $u_a$ for some $\bar \lambda\in \mathbb R$, which is an interior local minimizer of the functional $E|_{S_a}$.
The properties of $u_a$ directly follow the symmetric decreasing rearrangement. Indeed,
let $|u_a |^{\ast}$ be the symmetric decreasing rearrangement of $u_a$,  then it clearly that
$|u_a|^*\in A_{a,\rho_0}$ and $E(|u_a|^*)\leq E(u_a)$. Thus, $u_a$ can be chosen to be a nonnegative and radially decreasing function.
Now we prove that it is a ground state. In view of the arguments in subsection \ref{sub:properties}, we know that if  $\mathcal P_0=\emptyset$, then there holds
$$
 \overline{\gamma_{\mu}}(a)=\inf_{\mathcal{P}}E=\inf_{\mathcal{P}_+}E,
$$
which implies that $u_a$ is a ground state because all critical points of $E|_{S_a}$ belong to the Pohozaev manifold. Hence, it suffices to verify $\mathcal P_0=\emptyset$.
Note that
$$
C_{N,s_1,p}=\mathcal S^{-\frac{2}{2_{s_1}^*}}\ \mbox{and}\ \kappa_p=s_1\ \mbox{for}\ p=2_{s_1}^*.
$$
By the proof of Lemma \ref{lem:P-natural-subcritical}, if we assume that
\begin{equation}\label{eqn:a-leq-bar-a-0}
\begin{aligned}
&a^{\frac{2qs_1 -N(q-2)}{4s_1 -N(q-2)}}\mu^{\frac{2 s_1}{4s_1 -N(q-2)}}\\
&\ \ <\bigg(\frac{s_1^2(2s_1^* -2)}{ C_{N,s_1,q} \kappa_q(s_1 2_{s_1}^* -q\kappa_q )}\bigg)^{\frac{2 s_1}{4s_1 -N(q-2)}} \bigg(\frac{\mathcal S^{\frac{2}{2_{s_1}^*}}(2s_1 -q\kappa_q)}{ 2_{s_1}^*s_1 - q\kappa_q }\bigg)^{\frac{N-2s_1}{4s_1}},
\end{aligned}
\end{equation}
then we can also obtain a contradiction and determine $\mathcal P_0=\emptyset$. Obviously, \eqref{eqn:a-leq-bar-a-0} is equivalent to
\begin{equation}\label{eqn:a-0-condition-2}\small
\begin{aligned}
a&<\mu^{-\frac{2 s_1}{2qs_1 -N(q-2)}}\bigg(\frac{s_1^2(2s_1^* -2)}{ C_{N,s_1,q} \kappa_q(s_1 2_{s_1}^* -q\kappa_q )}\bigg)^{\frac{2 s_1}{2qs_1 -N(q-2)}} \bigg(\frac{\mathcal S^{\frac{2}{2_{s_1}^*}}(2s_1 -q\kappa_q)}{ 2_{s_1}^*s_1 - q\kappa_q }\bigg)^{\frac{N-2s_1}{4s_1}\cdot{\frac{4s_1 -N(q-2)}{2qs_1 -N(q-2)}}}\\
&:=\bar a_0(\mu).
\end{aligned}
\end{equation}
Thus, if $a<\min\{a_0(\mu),\bar a_0(\mu)\}$, we conclude that $u_a$ is a ground state and the proof is complete.
\qed
\ \\
\ \\
\noindent\textbf{\Large Acknowledgements}

\noindent {This research is supported by National Natural Science Foundation of China (No.12371120).}
\vspace{0.8em}

\noindent\textbf{\Large Declaration}

\noindent {{\bf Conflict of interest} The authors do not have conflict of interest.}
\vspace{0.8em}

\noindent\textbf{\Large Data availability}

\noindent {No data was used for the research described in the article.}

\bibliographystyle{elsarticle-num}

\end{document}